\newcommand\N{\mathbb{N}}
\newcommand\E{\mathbb{E}}
\newcommand\F{\mathbb{F}}
\newcommand\R{\mathbb{R}}
\newcommand\T{\mathbb{T}}
\newcommand\G{\mathbb{G}}
\newcommand{\veps}{\varepsilon}
\newcommand\calF{\mathcal{F}}
\newcommand\calE{\mathcal{E}}
\newcommand\calG{\mathcal{G}}
\newcommand\calB{\mathcal{B}}
\newcommand\calO{\mathcal{O}}
\newcommand\calN{\mathcal{N}}
\newcommand\calC{\mathcal{C}}
\newcommand\calV{\mathcal{V}}
\newcommand\calA{\mathcal{A}}
\newcommand\calW{\mathcal{W}}
\newcommand\calR{\mathcal{R}}
\newcommand\calT{\mathcal{T}}
\newcommand\calH{\mathcal{H}}
\newcommand\calK{\mathcal{K}}
\newcommand\wh{\widehat}
\newcommand\wt{\widetilde}
\newcommand\Var{\text{Var}}
\newcommand{\as}{\hspace{20pt} \text{a.s.}}
\renewcommand{\(}{\left(}
\renewcommand{\)}{\right)}
\def\build#1_#2^#3{\mathrel{\mathop{\kern 0pt#1}\limits_{#2}^{#3}}}
\def\liml{\build{\longrightarrow}_{}^{{\mbox{$\mathcal L$}}}}
\def\limnorm{\build{\longrightarrow}_{}^{{\mbox{$L^1$}}}}
\def\H#1{\textup{\textbf{(H.\ref{#1})}}}
\numberwithin{equation}{section}
\theoremstyle{plain}
\newtheorem{TD}{Theorem-Definition}[section]
\newtheorem{Prop}[TD]{Proposition}
\newtheorem{Theo}[TD]{Theorem}
\newtheorem{Lem}[TD]{Lemma}
\newtheorem{Rem}[TD]{Remark}
\begin{document}

\title[Asymptotic analysis for RCBAR processes]
{Asymptotic results for random coefficient bifurcating autoregressive processes}
\author{Vassili Blandin}
\dedicatory{\normalsize Universit\'e Bordeaux 1}
\address{Universit\'e Bordeaux 1, Institut de Math\'ematiques de Bordeaux, UMR CNRS 5251, and INRIA Bordeaux, team ALEA,
	351 cours de la lib\'eration, 33405 Talence cedex, France.}

\email{vassili.blandin@math.u-bordeaux1.fr}
\keywords{bifurcating autoregressive process; random coefficient; weighted least squares; martingale; almost sure convergence; central limit theorem}

\subjclass[2010]{Primary 60F15; Secondary 60F05, 60G42}

\begin{abstract}
The purpose of this paper is to study the asymptotic behavior of the weighted least squares estimators of the unknown parameters of random coefficient bifurcating autoregressive processes. Under suitable assumptions on the immigration and the inheritance, we establish the almost sure convergence of our estimators, as well as a quadratic strong law and central limit theorems. Our study mostly relies on limit theorems for vector-valued martingales.
\end{abstract}

\maketitle


\section{Introduction}


In this paper, we will study random coefficient bifurcating autoregressive processes (RCBAR). Those processes are an adaptation of random coefficient autoregressive processes (RCAR) to binary tree structured data. We can also see those processes as the combination of RCAR processes and bifurcating autoregressive processes (BAR). RCAR processes have been first studied by Nicholls and Quinn \cite{NichollsQuinn1,NichollsQuinn2} while BAR processes have been first investigated by Cowan and Staudte \cite{CowanStaudte}. Both inherited and environmental effects are taken into consideration in RCBAR processes in order to explain the evolution of the characteristic under study. The binary tree structure could lead us to take cell division as an example.\\

More precisely, the first-order RCBAR process is defined as follows. The initial cell is labelled $1$ and the offspring of the cell labelled $n$ are labelled $2n$ and $2n+1$. Denote by $X_n$ the characteristic of individual $n$. Then, the first-order RCBAR process is given, for all $n\geq1$, by
\begin{equation*}
\begin{cases}
X_{2n} &= a_n X_n + \veps_{2n}\\
X_{2n+1} &= b_n X_n + \veps_{2n+1}
\end{cases}
\end{equation*}
The environmental effect is given by the driven noise sequence $(\veps_{2n},\veps_{2n+1})_{n\geq1}$ while the inherited effect is given by the random coefficient sequence $(a_n,b_n)_{n\geq1}$. The cell division example leads us to consider that $\veps_{2n}$ and $\veps_{2n+1}$ are correlated since the environmental effect on two sister cells can reasonably be seen as correlated.\\

This study is inspired by experiments on the single celled organism \textit{Escherichia coli}, see Stewart et al.~\cite{Stewart} or Guyon et al.~\cite{Guyonbio}, which reproduces by dividing itself into two poles, one being called the new pole, the other being called the old pole. Experimental data seems to show that some variables among cell lines, such as the life span of the cells, does not evolve in the same way whether it is the new or the old pole. The difference in the evolution leads us to consider an asymmetric RCBAR. Considering a RCBAR process instead of a BAR process allows us to assume that the inherited effect is no more deterministic, as randomness often appears in nature. Moreover, we can consider both deterministic and random inherited effects since we also allow the random variables modeling the inherited effect to be deterministic, making this study usable for RCBAR as well as BAR.\\

This paper, which is an adaptation of \cite{Vassili} to RCBAR processes, intends to study the asymptotic behavior of the weighted least squares (WLS) estimators of first-order RCBAR processes using a martingale approach. This martingale approach has been first proposed by Bercu et al.~\cite{BercuBDSAGP} and de Saporta et al.~\cite{BDSAGPMarsalle} for BAR processes. The WLS estimation of parameters branching processes was previously investigated by Wei and Winnicki \cite{WeiWinnicki} and Winnicki \cite{Winnicki}. We will make use several times of the strong law of large numbers \cite{Duflo} as well as the central limit theorem \cite{Duflo,HallHeyde} for martingales, in order to investigate the asymptotic behavior of the WLS estimators. Those theorems have been previously used by Basawa and Zhou \cite{BasawaZhou,ZhouBasawa,ZhouBasawa2}.\\

Several approaches appeared for BAR processes, and we tried not to set aside any of them. Thus, we took into account the classical BAR studies as seen in Huggins and Basawa \cite{HugginsBasawa99,HugginsBasawa2000} and Huggins and Staudte \cite{HugginsStaudte} who studied the evolution of cell diameters and lifetimes, and also the bifurcating Markov chain model introduced by Guyon \cite{Guyon} and used in Delmas and Marsalle \cite{DelmasMarsalle}. Still, we did not forget to have a look to the analogy with the Galton-Watson processes as studied in Delmas and Marsalle \cite{DelmasMarsalle} and Heyde and Seneta \cite{HeydeSeneta}. Several methods have also been used for parameter estimation in RCAR processes. Koul and Schick \cite{KoulSchick} used an M-estimator while Aue et al. \cite{AueHorvathSteinebach} preferred a quasi-maximum likelihood approach. Schick \cite{Schick} introduced a new class of estimator that Vanecek \cite{Vanecek} used in his work. Hwang et al. \cite{HwangBasawaKim} also tackled the critical case where the environmental effect follows a Rademacher distribution.\\

The paper is organized as follows. Section 2 allows us to explain more precisely the model in which we are interested in, then Section 3 formulates the WLS estimators of the unknown parameters we will study. Section 4 permits us to introduce the martingale point of view of this paper. The main results are collected in Section 5, those results concern the asymptotic behavior of our WLS estimators, to be more accurate, we will establish the almost sure convergence, the quadratic strong law and the asymptotic normality of our estimators. Finally, the other sections gathers the proofs of our main results, except the last section which illustrates our results with a small simulation study.


\section{Random coefficient bifurcating autoregressive processes}\label{section2}


Consider the first-order RCBAR process given, for all $n\geq1$, by
\begin{equation}
\left\{
\begin{aligned}\label{defsyst}
&X_{2n} &= a_n X_n & + \veps_{2n}\\
&X_{2n+1} &= b_n X_n & + \veps_{2n+1}
\end{aligned}
\right.
\end{equation}
where
the initial state $X_1$ is the ancestor of the process and $(\veps_{2n},\veps_{2n+1})$ stands for the driven noise of the process. In all the sequel, we shall assume that $\E[X_1^2]<\infty$. We also assume that  both $(a_n,b_n)_{n\geq1}$ and $(\veps_{2n},\veps_{2n+1})_{n\geq1}$ are i.i.d., and that those two sequences are independent. One can see the RCBAR process given by \eqref{defsyst} as a first-order random coefficient autoregressive process on a binary tree, where each node 
represents an individual, node 1 being the original ancestor. For all $n\geq1$, denote the $n$-th generation by $\G_n = \{2^n, 2^{n}+1, \hdots, 2^{n+1}-1\}$. In particular, $\G_0 = \{1\}$ is the initial generation and $\G_1 = \{2,3\}$ is the first generation of offspring 
from the first ancestor. Recall that the two 
offspring of individual $n$ are labelled $2n$ and $2n+1$, or conversely, the mother of individual $n$ is $[n/2]$ where $[x]$ stands for 
the largest integer less than or equal to $x$. Finally 
denote by
$$\T_n = \bigcup_{k=0}^n \G_n$$

\noindent the sub-tree of all individuals from the original individual up to the $n$-th generation. On can observe that the cardinality $|\G_n|$ 
of $\G_n$ is $2^n$ while that of $\T_n$ is $|\T_n| = 2^{n+1}-1$.\\

\begin{figure}[h!]
\hspace{-2cm}
\includegraphics[scale=0.7]{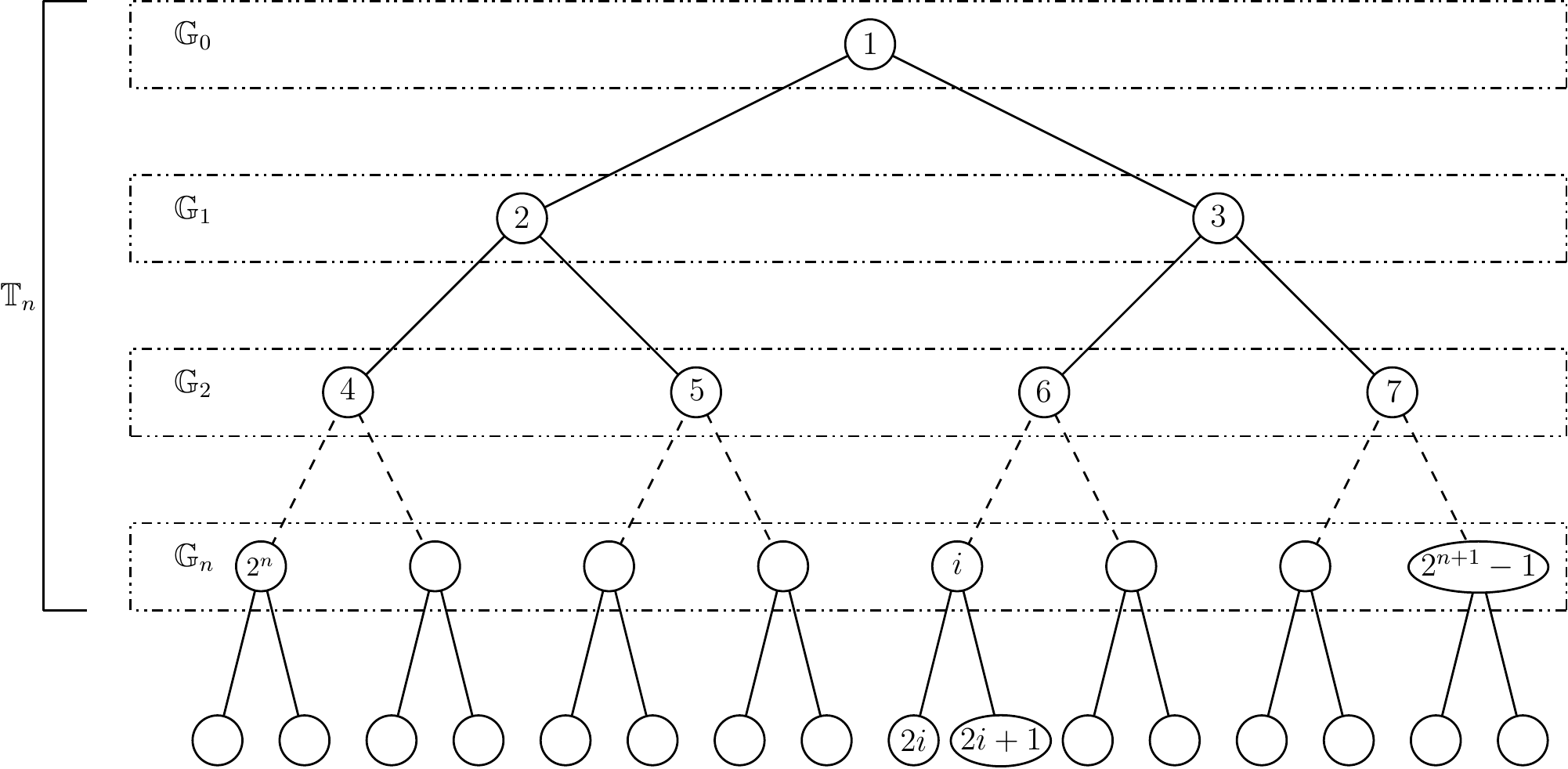}
\caption{The tree associated with the RCBAR}
\end{figure}


\section{Weighted least-squares estimation}


Denote by $\F = (\calF_n)_{n\geq0}$ the natural filtration associated with the first-order RCBAR process, which means that 
$\calF_n$ is the $\sigma$-algebra generated by all individuals up to the $n$-th generation, in other words $\calF_n=\sigma\{X_k,k\in\T_n\}$. We will assume in all the sequel that, for all $n\geq0$ and for all $k\in \G_n$,
\begin{equation}\label{espcond}
\begin{cases}
\E[a_k|\calF_n]=a \as\\
\E[b_k|\calF_n]=b \as\\
\E[\veps_{2k}|\calF_n] = c \hspace{20pt} \text{a.s.}\\
\E[\veps_{2k+1}|\calF_n] = d \hspace{20pt} \text{a.s.}
\end{cases}
\end{equation}
Consequently, we deduce from \eqref{defsyst} and \eqref{espcond} that, for all $n\geq0$ and for all $k\in \G_n$,
\begin{equation}
\label{linsyst}
\begin{cases}
X_{2k} &= a X_k + c + V_{2k},\\
X_{2k+1} &= b X_k + d + V_{2k+1},
\end{cases}
\end{equation}

\noindent where, $V_{2k} = X_{2k} - \E[X_{2k}|\calF_{n}]$ and $V_{2k+1} = X_{2k+1} - \E[X_{2k+1}|\calF_{n}]$. Therefore, the two relations given by \eqref{linsyst} can be rewritten in a classic autoregressive form
\begin{equation}
 \label{matsyst}
 \chi_n = \theta^t\Phi_n + W_n
\end{equation}
where
$$
\begin{array}{ccccc}
 \chi_n = \begin{pmatrix} X_{2n}\\ X_{2n+1} \end{pmatrix},
  & & \Phi_n = \begin{pmatrix} X_{n}\\ 1 \end{pmatrix},
  & & W_n = \begin{pmatrix} V_{2n}\\ V_{2n+1} \end{pmatrix},
\end{array}
$$
and the matrix parameter
$$\theta = \begin{pmatrix} a & b \\ c & d \end{pmatrix}.$$

\noindent Our goal is to estimate $\theta$ from the observation of all individuals up to $\T_n$. We propose 
to make use of the WLS estimator $\wh{\theta}_n$ of $\theta$ which minimizes
$$\Delta_n(\theta) = \frac12\sum_{k \in \T_{n-1}} \frac1{c_k}\|\chi_k-\theta^t \Phi_k\|^2$$

\noindent where the choice of the weighting sequence $(c_n)_{n\geq1}$ is crucial. We shall choose $c_n=1+X_n^2$ and we will go back to this suitable choice in Section \ref{martappro}. Consequently, we obviously have for all $n\geq1$
\begin{equation}
\label{eqest}
\wh{\theta}_n = S_{n-1}^{-1} \sum_{k\in\T_{n-1}} \frac1{c_k}\Phi_k\chi_k^t, \hspace{20pt} \text{where} \hspace{20pt} S_n = \sum_{k\in\T_n} \frac1{c_k} \Phi_k \Phi_k^t.
\end{equation}

\noindent In order to avoid useless invertibility assumption, we shall assume, without loss of generality, that for all $n\geq0$, $S_n$ 
is invertible. Otherwise, we only have to add the identity matrix of order 2, $I_2$ to $S_n$. In all what follows, we shall make a slight 
abuse of notation by identifying $\theta$ as well as $\wh\theta_n$ to
$$\begin{array}{ccccc}
 \text{vec}(\theta) = \begin{pmatrix} a \\ c \\  b \\ d \end{pmatrix} & & \text{and} & & 
 \text{vec}(\wh \theta_n) = \begin{pmatrix} \wh{a}_n \\ \wh{c}_n \\ \wh{b}_n \\ \wh{d}_n \end{pmatrix}.
\end{array}$$

\noindent Therefore, we deduce from \eqref{eqest} that
\begin{align}
 \wh{\theta}_n = \Sigma_{n-1}^{-1} \sum_{k\in\T_n-1} \frac1{c_k}\text{vec}(\Phi_k\chi_k^t)
  = \Sigma_{n-1}^{-1} \sum_{k\in\T_n-1} \frac1{c_k} \begin{pmatrix} X_kX_{2k} \\ X_{2k} \\ X_kX_{2k+1} \\ X_{2k+1} \end{pmatrix} \nonumber
\end{align}

\noindent where $\Sigma_n = I_2 \otimes S_n$ and $\otimes$ stands for the standard Kronecker product. Consequently, \eqref{matsyst} yields to
\begin{align}
 \wh{\theta}_n -\theta &= \Sigma_{n-1}^{-1} \sum_{k\in\T_{n-1}} \frac1{c_k}\text{vec}(\Phi_k W_k^t), \nonumber \\
  \label{diffest} &= \Sigma_{n-1}^{-1} \sum_{k\in\T_{n-1}} \frac1{c_k} \begin{pmatrix} X_kV_{2k} \\ V_{2k} \\ X_kV_{2k+1} 
									    \\ V_{2k+1} \end{pmatrix}.
\end{align}

\noindent In all the sequel, we shall make use of the following moment hypotheses.

\begin{enumerate}[\bf{({H}.}1)]
\item\label{H1} For all $k\geq1$, 
\begin{equation*}
\E[a_k^2] < 1 \hspace{20pt}  \text{and} \hspace{20pt} \E[b_k^2] < 1.
\end{equation*}
 \item \label{H2} For all $n\geq0$ and for all $k\in \G_n$
$$\begin{array}{ccccccc}
  \Var[a_k|\calF_n] = \sigma_a^2 \geq 0 & \text{and} &  \Var[b_k|\calF_n] = \sigma_b^2 \geq 0 & & \text{a.s.}
\end{array}$$
$$\begin{array}{ccccccccccc}
  \Var[\veps_{2k}|\calF_n] = \sigma_c^2 > 0 & \text{and} &  \Var[\veps_{2k+1}|\calF_n] = \sigma_d^2 > 0 & & \text{a.s.}
\end{array}$$
 \item \label{H3} For all $n\geq0$ and for all $k,l\in\G_{n+1}$, if $[k/2]\neq[l/2]$, $\veps_k$ and $\veps_l$ are conditionally independent given $\calF_n$ and for all $k,l\in\G_{n}$, if $k\neq l$, $(a_k,b_k)$ and $(a_l,b_l)$ are conditionally independent given $\calF_n$. While otherwise, it exists $\rho_{cd}^2 < \sigma_c^2 \sigma_d^2$ and $\rho_{ab}^2\leq\sigma_a^2\sigma_b^2$ such that, for all $k\in\G_n$
$$\E[(\veps_{2k}-c)(\veps_{2k+1}-d)|\calF_n] = \rho_{cd} \hspace{20pt} \text{ a.s.}$$
$$\E[(a_k-a)(b_k-b)|\calF_n]=\rho_{ab} \as$$
\item \label{H4} One can find $\mu_a^4\geq\sigma_a^4$, $\mu_b^4\geq\sigma_b^4$, $\mu_c^4>\sigma_c^4$ and $\mu_d^4>\sigma_d^4$ such that, for all $n\geq0$ and for all $k\in \G_n$
$$\begin{array}{ccccccc}
  \E\left[\left(a_{k}-a\right)^4|\calF_n\right] = \mu_a^4 & & \text{and} & & \E\left[\left(b_k-b\right)^4|\calF_n\right] = \mu_c^4 & & \text{a.s.}
\end{array}$$
$$\begin{array}{ccccccc}
  \E\left[\left(\veps_{2k}-c\right)^4|\calF_n\right] = \mu_c^4 & & \text{and} & & \E\left[\left(\veps_{2k+1}-d\right)^4|\calF_n\right] = \mu_d^4 & & \text{a.s.}
\end{array}$$
$$\E[\veps_{2k}^4]>\E[\veps_{2k}^2]^2 \hspace{20pt} \text{ and } \hspace{20pt} \E[\veps_{2k+1}^4]>\E[\veps_{2k+1}^2]^2.$$
In addition, it exists $\nu_{ab}^2 \geq \rho_{ac}^2$ and $\nu_{cd}^2 > \rho_{cd}^2$ such that, for all $k\in\G_n$
$$\E[(a_k-a)^2(b_k-b)^2|\calF_n] = \nu_{ab}^2 \hspace{20pt} \text{and} \hspace{20pt} \E[(\veps_{2k}-c)^2(\veps_{2k+1}-d)^2|\calF_n] = \nu_{cd}^2 \hspace{20pt} \text{ a.s.}$$
\item \label {H5}It exists $\alpha>4$ such that
$$\sup_{n\geq0}\sup_{k\in\G_n} \E[|a_k-a|^\alpha|\calF_n]<\infty, \hspace{20pt} \sup_{n\geq0}\sup_{k\in\G_n} \E[|b_k-b|^\alpha|\calF_n]<\infty \as$$
$$\sup_{n\geq0}\sup_{k\in\G_n} \E[|\veps_{2k}-c|^\alpha|\calF_n]<\infty, \hspace{20pt} \sup_{n\geq0}\sup_{k\in\G_n} \E[|\veps_{2k+1}-d|^\alpha|\calF_n]<\infty \as$$
\end{enumerate}

\vspace{25pt}

\noindent One can observe that those hypotheses allows us to consider the deterministic case where it exists some constants $a$, $b$ with $\max(|a|,|b|)<1$ such that, for all $k\geq1$, $a_k=a$ and $b_k=b$ a.s. Moreover, under assumption \H{H2}, we have for all $n\geq0$ and for all $k\in\G_n$
\begin{eqnarray}
   \E[V_{2k}^2|\calF_{n}] = \sigma_a^2X_k^2 + \sigma_c^2 & \text{ and } & \E[V_{2k+1}^2|\calF_{n}] = \sigma_b^2X_k^2 + \sigma_d^2 \text{\hspace{20pt} a.s.} \label{espV2}
  \end{eqnarray}

\noindent Consequently, if we choose $c_n=1+X_n^2$ for all $n\geq1$, we clearly have for all $k\in\G_n$
$$\begin{array}{cccc}
\E\left[\left.V_{2k}^2\right|\calF_{n}\right] \leq \max(\sigma_a^2,\sigma_c^2)c_k & \text{ and } &
\E\left[\left.V_{2k+1}^2\right|\calF_{n}\right] \leq \max(\sigma_b^2,\sigma_d^2)c_k &  \text{ a.s.}
\end{array}$$

\noindent It is exactly the reason why we have chosen this weighting sequence into \eqref{eqest}. Similar WLS estimation approach for branching processes with immigration may be found in \cite{WeiWinnicki} and \cite{Winnicki}. For all $n\geq0$ and for all $k\in\G_n$, denote $v_{2k} = V_{2k}^2 - \E[V_{2k}^2 | \calF_n]$. We deduce from \eqref{espV2} that for all $n\geq1$,  $V_{2n}^2 = \eta^t \psi_n + v_{2n}$ where $\eta$ is defined by
$$\eta = \begin{pmatrix} \sigma_a^2 \\ \sigma_c^2 \end{pmatrix} \hspace{20pt} \text{and} \hspace{20pt} \psi_n=\begin{pmatrix} X_n^2 \\ 1 \end{pmatrix}.$$
It leads us to estimate the vector of variances $\eta$ by the WLS estimator
\begin{equation} \wh \eta_n = Q_{n-1}^{-1} \sum_{k\in \T_{n-1}} \frac{1}{d_k} \wh V_{2k}^2 \psi_k, \label{esteta}  \hspace{20pt} \text{where} \hspace{20pt} Q_n = \sum_{k\in\T_n} \frac1{d_k} \psi_k \psi_k^t\end{equation}
and for all $k\in\G_n$, 
\begin{equation*}
\begin{cases}
\wh V_{2k} &= X_{2k} - \wh a_n X_k - \wh c_n,\vspace{1ex}\\
 \wh V_{2k+1} &= X_{2k+1} - \wh b_n X_k - \wh d_n.
\end{cases}
\end{equation*}
Finally the weighting sequence $(d_n)_{n\geq1}$ is given, for all $n\geq1$, by $d_n=c_n^2=(1+X_n^2)^2$. This choice is due to the fact that for all $n\geq1$ and for all $k\in\G_n$
\begin{align*}
\E[v_{2k}^2|\calF_n] &= \E[V_{2k}^4|\calF_n] - \left(\E[V_{2k}^2|\calF_n]\right)^2 \hspace{20pt} \text{ a.s.}\nonumber\\
&=(\mu_a^4 - \sigma_a^4) X_k^4 + 4 \sigma_a^2\sigma_c^2 X_k^2 +(\mu_c^4-\sigma_c^4) \as \label{Ev2k}
\end{align*}
 Consequently, as $d_n\geq1$, we clearly have for all $n\geq1$ and for all $k\in\G_n$
$$\E[v_{2k}^2|\calF_n] \leq \max(\mu_a^4 - \sigma_a^4, 2\sigma_a^2\sigma_c^2, \mu_c^4-\sigma_c^4) d_k \hspace{20pt} \text{ a.s.}$$

\noindent We have a similar WLS estimator $\wh\zeta_n$ of the vector of variances 
$$\zeta^t =\begin{pmatrix} \sigma_b^2 & \sigma_d^2 \end{pmatrix}$$ 
by replacing $\wh V_{2k}^2$ by $\wh V_{2k+1}^2$ into \eqref{esteta}. Let us remark that, for all $n\geq0$ and for all $k\in\G_n$,
\begin{equation}\label{espcroise}
\E[V_{2k}V_{2k+1}|\calF_n] = \rho_{ab} X_n^2 + \rho_{cd}.
\end{equation}
Then, for all $n\geq0$ and for all $k\in\G_n$, denote $w_{2k} = V_{2k}V_{2k+1} - \E[V_{2k}V_{2k+1} | \calF_n]$. We deduce from \eqref{espcroise} that for all $k\geq1$, $V_{2k}V_{2k+1} = \nu^t \psi_k + w_{2k}$
where $\nu$ is defined by
$$\nu = \begin{pmatrix} \rho_{ab} \\ \rho_{cd} \end{pmatrix}.$$
It leads us to estimate the vector of covariances $\nu$ by the WLS estimator
\begin{equation}\label{carre} \wh \nu_n = Q_{n-1}^{-1} \sum_{k\in\T_{n-1}} \frac1{d_k} \wh V_{2k} \wh V_{2k+1} \psi_k. \end{equation}
This choice is due to the fact that for all $n\geq1$ and for all $k\in\G_n$
$$\E[V_{2k}^2V_{2k+1}^2|\calF_n] = \nu_{ab}^2 X_k^4 + (\sigma_a^2\sigma_d^2 +4 \rho_{ab} \rho_{cd} + \sigma_b^2\sigma_c^2)X_k^2 + \nu_{cd}^2 \as$$
 Consequently, as $d_n\geq1$, we clearly have for all $n\geq1$ and for all $k\in\G_n$
\begin{align*}
\E[w_{2k}^2|\calF_n] &=(\nu_{ab}^2-\rho_{ab}^2)X_k^4 + \left(\sigma_a^2\sigma_d^2 + \sigma_b^2\sigma_c^2 + 2\rho_{ab}\rho_{cd}\right)X_k^2 + (\nu_{cd}^2-\rho_{cd}^2) \as\\
	&\leq \max\(\nu_{ab}^2,\nu_{cd}^2,\(\sigma_a^2+\sigma_c^2\)\(\sigma_b^2+\sigma_d^2\)\) d_k \hspace{20pt} \text{ a.s.}
\end{align*}


\section{A martingale approach}\label{martappro}


In order to establish all the asymptotic properties of our estimators, we shall make use of a martingale approach. For all $n\geq1$, denote
$$ M_n = \sum_{k\in\T_{n-1}} \frac1{c_k} \begin{pmatrix} X_kV_{2k} \\ V_{2k} \\ X_kV_{2k+1} \\ V_{2k+1} \end{pmatrix}. $$

\noindent We can clearly rewrite \eqref{diffest} as
\begin{equation}\wh\theta_n - \theta = \Sigma_{n-1}^{-1} M_n.\label{difftheta}\end{equation}

\noindent As in \cite{BercuBDSAGP}, we make use of the notation $M_n$ since it appears that $(M_n)_{n\geq1}$ is a martingale. This fact is a crucial point of our study and it justifies the vector notation since most of all asymptotic results for martingales were established for vector-valued martingales. Let us rewrite $M_n$ in order to emphasize its martingale quality. Let $\Psi_n = I_2 \otimes \varphi_n$ 
where $\varphi_n$ is the matrix of dimension $2\times2^n$ given by
\begin{equation*}\varphi_n = \begin{pmatrix} \displaystyle \frac{X_{2^n}}{\sqrt{c_{2^n}}} & \displaystyle \frac{X_{2^n+1}}{\sqrt{c_{2^n+1}}} & \displaystyle \hdots & 
		\displaystyle \frac{X_{2^{n+1}-1}}{\sqrt{c_{2^{n+1}-1}}} \vspace{5pt} \\
	    \displaystyle \frac1{\sqrt{c_{2^n}}} & \displaystyle \frac1{\sqrt{c_{2^n+1}}} & \displaystyle \hdots & 
		\displaystyle \frac1{\sqrt{c_{2^{n+1}-1}}}
\end{pmatrix}.\end{equation*}

\noindent It represents the individuals of the $n$-th generation which is also the collection of all $\Phi_k/\sqrt{c_k}$ where $k$ belongs to $\G_n$. Let $\xi_n$ 
be the random vector of dimension $2^n$
$$\xi_n^t = \begin{pmatrix} \displaystyle\frac{V_{2^n}}{\sqrt{c_{2^{n-1}}}} & \displaystyle\frac{V_{2^n+2}}{\sqrt{c_{2^{n-1}+1}}} & \hdots &
			   \displaystyle\frac{V_{2^{n+1}-2}}{\sqrt{c_{2^{n}-1}}} &  \displaystyle\frac{V_{2^n+1}}{\sqrt{c_{2^{n-1}}}} &
			  \displaystyle\frac{V_{2^n+3}}{\sqrt{c_{2^{n-1}+1}}} & \hdots & \displaystyle\frac{V_{2^{n+1}-1}}{\sqrt{c_{2^{n}-1}}}
	    \end{pmatrix}.
$$

\noindent The vector $\xi_n$ gathers the noise variables of $\G_n$. The special ordering separating odd and even indices has been made in \cite{BercuBDSAGP}
so that $M_n$ can be written as
$$M_n = \sum_{k=1}^n \Psi_{k-1} \xi_k.$$
Under \eqref{espcond}, we clearly have for all $n\geq0$, $\E[\xi_{n+1}|\calF_n] = 0$ a.s.~and $\Psi_n$ is $\calF_n$-measurable. In addition it is 
not hard to see that under \H{H1} to \H{H2}, $(M_n)$ is a locally square integrable vector martingale with increasing 
process given, for all $n\geq1$, by
\begin{align}
\langle M\rangle_n &= \sum_{k=0}^{n-1} \Psi_k \E[\xi_{k+1}\xi_{k+1}^t|\calF_k]\Psi_k^t =\sum_{k=0}^{n-1} L_k \hspace{20pt} \text{a.s.} \label{defcrochetM}
\end{align}

\noindent where 
\begin{equation}\label{defLk}
    L_k = \sum_{i\in\G_{k}} \frac1{c_i^2} \begin{pmatrix} P(X_i) & Q(X_i) \\ Q(X_i) & R(X_i) \end{pmatrix}\otimes\begin{pmatrix} X_i^2 & X_i \\ X_i & 1\end{pmatrix}.
\end{equation}
with
$$\begin{cases} P(X) = \sigma_a^2X^2 + \sigma_c^2,\\
Q(X) = \rho_{ab} X^2 + \rho_{cd},\\
R(X) = \sigma_b^2 X^2 + \sigma_d^2.
\end{cases}$$

\noindent One can remark that we obviously have $\langle M\rangle _n=\calO(\T_n)$ but it is necessary to establish the convergence of $\langle M\rangle _n$, properly normalized, in order to prove the asymptotic results for our RCBAR estimators $\wh\theta_n$, $\wh\eta_n$, $\wh\zeta_n$ and $\wh \nu_n$.


\section{Main results} \label{mainresults}


We have to introduce some more notations in order to state our main results. From the original process $(X_n)_{n\geq1}$, we shall define a new process $(Y_n)_{n\geq1}$ recursively defined by $Y_1=X_1$, and if $Y_n=X_k$ with $n,k\geq1$, then 
$$Y_{n+1} = X_{2k+\kappa_n}$$

\noindent where $(\kappa_n)_{n\geq1}$ is a sequence of i.i.d.~random variables with Bernoulli $\calB\left(1/2\right)$ distribution. Such a construction may be found in \cite{Guyon} for the asymptotic analysis of BAR processes. The process $(Y_n)$ gathers the values of the original process $(X_n)$ along the random branch of the binary tree $(\T_n)$ given by $(\kappa_n)$. Denote by $k_n$ the unique $k\geq1$ such that $Y_n=X_k$. Then, for all $n\geq1$, we have
\begin{equation} Y_{n+1} = \wt a_{n+1}  Y_{n} + e_{n+1} \label{defYn1}\end{equation}
where, with $k_n$ the unique number $k$ such that $Y_n = X_k$,
\begin{equation}\label{defYn2}
\wt a_{n+1} = \begin{cases}a_{k_n} \text{ if } \kappa_n=0, \\ b_{k_n} \text{ otherwise,} \end{cases} \hspace{20pt} \text{and} \hspace{20pt} e_n = \veps_{k_n}.
\end{equation}

\begin{Lem} \label{CVYn}
 Assume that \H{H1} and \H{H2} are satisfied. Then, we have
\begin{equation*}
 Y_n \liml T 
\end{equation*}
where $T$ is a positive non degenerate random variable with $\E[T^2]<\infty$.
\end{Lem}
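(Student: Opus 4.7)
I would first observe that $(\wt a_n,e_n)_{n\geq 2}$ is an i.i.d.\ sequence of $\R^2$-valued vectors, independent of $Y_1=X_1$. The recursion $k_{n+1}=2k_n+\kappa_n$ is strictly increasing, so different times query different (hence independent) pairs of the i.i.d.\ sequences $(a_k,b_k)_k$ and $(\veps_{2k},\veps_{2k+1})_k$; together with the independence and uniformity of $(\kappa_n)$ this gives the full i.i.d.\ structure. Under \H{H1},
$$\E[\wt a_2^2]=\tfrac12\bigl(\E[a_1^2]+\E[b_1^2]\bigr)<1,$$
and \eqref{espcond} together with \H{H2} yields $\E[e_2^2]<\infty$.

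\textbf{Step 2: unfolding and time-reversal.} Iterating \eqref{defYn1} gives
$$Y_{n+1}=\Bigl(\prod_{j=2}^{n+1}\wt a_j\Bigr)Y_1+\sum_{k=2}^{n+1}\Bigl(\prod_{j=k+1}^{n+1}\wt a_j\Bigr)e_k.$$
Exchangeability of $(\wt a_j,e_j)_{2\leq j\leq n+1}$ and its independence from $Y_1$ make the reversal $j\mapsto n+3-j$ preserve the joint law, so a direct change of variables yields
$$Y_{n+1}\stackrel{\mathcal{L}}{=}\Bigl(\prod_{j=2}^{n+1}\wt a_j\Bigr)Y_1+\sum_{i=2}^{n+1}\Bigl(\prod_{m=2}^{i-1}\wt a_m\Bigr)e_i=:S_n.$$
In $S_n$ each factor $\prod_{m=2}^{i-1}\wt a_m$ involves only $\wt a$'s of index strictly smaller than $i$, hence independent of $e_i$ by Step 1; this is the whole point of the reversal.

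\textbf{Step 3: $L^2$ convergence.} Using the independence from Step 1,
$$\Bigl\|\Bigl(\prod_{m=2}^{i-1}\wt a_m\Bigr)e_i\Bigr\|_2=\sqrt{\E[e_2^2]}\,(\E[\wt a_2^2])^{(i-2)/2},$$
which is summable in $i$ because $\E[\wt a_2^2]<1$, and $\bigl\|(\prod_{j=2}^{n+1}\wt a_j)Y_1\bigr\|_2^2=(\E[\wt a_2^2])^n\E[X_1^2]\to 0$. The $L^2$ triangle inequality makes $S_n$ Cauchy in $L^2$, hence $S_n\to T:=\sum_{i\geq 2}(\prod_{m=2}^{i-1}\wt a_m)e_i$ in $L^2$, with $\E[T^2]<\infty$. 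Since $L^2$ convergence implies convergence in distribution, Step 2 gives $Y_n\liml T$.

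\textbf{Step 4: non-degeneracy and main obstacle.} From the definition of $T$ one has the stochastic fixed-point identity $T\stackrel{\mathcal{L}}{=}E+A\,T'$, where $(A,E)\stackrel{\mathcal{L}}{=}(\wt a_2,e_2)$ is independent of $T'\stackrel{\mathcal{L}}{=}T$. The conditional variance formula then gives $\Var(T)\bigl(1-\E[A^2]\bigr)=\Var\bigl(E+A\,\E[T]\bigr)$. Conditioning on the Bernoulli coin that selects $(A,E)=(a_1,\veps_2)$ or $(b_1,\veps_3)$, and using the independence of the $(a,b)$ and $(\veps)$ sequences together with \H{H2}, the right-hand side is at least $\tfrac12(\sigma_c^2+\sigma_d^2)>0$, and $\E[A^2]<1$ then forces $\Var(T)>0$. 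The main subtlety I foresee is the independence bookkeeping: at a fixed index $n$ the variables $\wt a_n$ and $e_n$ are typically \emph{dependent} (they share $\kappa_{n-1}$), so one must make sure that after the time-reversal only cross-index products occur, which is exactly what allows the $L^2$ factorization in Step 3.
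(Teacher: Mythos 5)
Your proof is correct, and it splits naturally into two halves relative to the paper. The convergence half is essentially the paper's argument: the same unfolding of \eqref{defYn1}, the same observation that $(\wt a_n,e_n)_{n\geq2}$ is i.i.d.\ (your bookkeeping via the strictly increasing indices $k_n$ and the shared coin $\kappa_n$ is exactly the point the paper glosses over), and the same time-reversal giving $Y_{n+1}\stackrel{\mathcal{L}}{=}Z_{n+1}$; the only cosmetic difference is that you run the series argument in $L^2$ under \H{H1}, which yields $\E[T^2]<\infty$ in the same stroke, whereas the paper first proves $L^1$ convergence using $\E[|\wt a_2|]<1$ and then gets the second moment separately via Cauchy--Schwarz. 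The non-degeneracy half, however, is genuinely different: the paper computes $\E[T]$, $\E[T^2]$ and hence $\Var(T)$ in closed form by expanding the square of the infinite series (using cross-terms like $\E[\wt a_k e_k]=(ac+bd)/2$) and then checks positivity of the resulting expression, while you exploit the stochastic fixed-point identity $T\stackrel{\mathcal{L}}{=}E+AT'$ with $(A,E)\perp T'$ and the conditional variance decomposition to get $\Var(T)(1-\E[A^2])=\Var(E+A\,\E[T])\geq\tfrac12(\sigma_c^2+\sigma_d^2)>0$. Your route is shorter, less error-prone, and isolates exactly which hypotheses are used ($\E[\wt a_2^2]<1$ from \H{H1}, $\sigma_c^2,\sigma_d^2>0$ from \H{H2}, independence of the two driving sequences); the paper's computation, on the other hand, produces explicit formulas for $\E[T]$ and $\Var(T)$ that have some independent interest. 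One small remark for parity: like the paper's own proof, you do not address the adjective ``positive'' in the statement of Lemma \ref{CVYn}, only non-degeneracy and square-integrability, so this is not a gap relative to the paper.
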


\noindent Denote
$\calC_{b}^1(\R_+) = \Bigl\{f\in\calC^1(\R,\R)\big|\exists \gamma >0, \forall x\geq0, (|f'(x)| + |f(x)|) \leq \gamma\Bigl\}$.

\begin{Lem}\label{LFGN}
Assume that \H{H1} and 
\H{H2} are satisfied. Then, for all $f\in \calC_{b}^1(\R_+)$, we have
$$\lim_{n\to\infty} \frac1{|\T_n|} \sum_{k\in \T_n} f(X_k) = \E[f(T)] \hspace{20pt}  \text{a.s.}$$
\end{Lem}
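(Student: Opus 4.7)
The plan is to reduce to a strong law over the last generation and then prove the latter via a martingale argument combined with the ergodic properties of the line chain $(Y_n)$. Since $\sum_{k\in\T_n} f(X_k) = \sum_{j=0}^n |\G_j|\,Z_j(f)$ with $Z_j(f) := |\G_j|^{-1}\sum_{k\in\G_j} f(X_k)$, and $|\G_n|/|\T_n|\to 1/2$, Toeplitz's lemma (applied to the probability weights $|\G_j|/|\T_n|$, which concentrate on the last generations) reduces the claim to proving $Z_n(f)\to \E[f(T)]$ a.s.

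Let $P_n(f) := \sum_{k\in\G_n} f(X_k)$ and introduce the one-step kernel of $(Y_n)$,
$$\wt Q f(x) := \tfrac12\E[f(a_1 x+\veps_2)] + \tfrac12\E[f(b_1 x+\veps_3)].$$
From \eqref{espcond} and \H{H3} one obtains $\E[P_n(f)\,|\,\calF_{n-1}] = 2\,P_{n-1}(\wt Q f)$, so that $D_j(g) := P_j(g) - 2\,P_{j-1}(\wt Q g)$ is an $\calF_j$-measurable martingale difference. Conditional independence of distinct sibling pairs within generation $j$ (under \H{H3}) gives
$$\E[D_j(g)^2\,|\,\calF_{j-1}] = \sum_{m\in\G_{j-1}}\Var\bigl(g(X_{2m})+g(X_{2m+1})\,|\,\calF_{j-1}\bigr).$$
Iterating $P_n(f) = D_n(f)+2\,P_{n-1}(\wt Q f)$ yields the key representation
$$Z_n(f) = \wt Q^n f(X_1) + \sum_{j=1}^n \frac{D_j(\wt Q^{n-j} f)}{2^j}.$$

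The core estimate is the contraction of $\wt Q$ on Lipschitz functions: with $|f'|\leq\gamma$, coupling two copies of $(Y_n)$ started at $x$ and $y$ with the same randomness gives $|\wt Q^k f(x)-\wt Q^k f(y)|\leq \gamma\,\alpha^k\,|x-y|$, where $\alpha:=\E|\wt a_1|<1$ by \H{H1} and Cauchy--Schwarz. Since $T$ is the invariant law of $\wt Q$ (a direct consequence of Lemma \ref{CVYn} together with the fact that $\wt Q$ maps bounded continuous functions to bounded continuous functions), this implies the pointwise contraction
$$|\wt Q^k f(x)-\E[f(T)]| \leq \gamma\,\alpha^k\bigl(|x|+\E|T|\bigr),$$
hence $\wt Q^n f(X_1)\to\E[f(T)]$ a.s. For the martingale remainder I would exploit $D_j(g+c)=D_j(g)$ (valid for constants $c$ since $\wt Q 1=1$) to replace $\wt Q^{n-j}f$ by $\wt Q^{n-j}f-\E[f(T)]$ and plug the contraction into the variance bound. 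The uniform bound $\sup_k\E[X_k^2]<\infty$ (which holds since each $X_k$ with $k\in\G_j$ has the law of $Y_{j+1}$ and $\E[Y_n^2]\to\E[T^2]<\infty$ by Lemma \ref{CVYn}) then yields an $L^2$ estimate of order $O(2^{-n}\vee\alpha^{2n})$ for the remainder, which is summable in $n$; Markov's inequality plus Borel--Cantelli upgrades this to a.s. convergence.

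The main technical obstacle is the non-uniformity in $x$ of the contraction of $\wt Q^k$ to its invariant measure: the Lipschitz bound grows linearly in $|x|$, so controlling the martingale remainder relies on the second-moment bound on the $X_k$'s, itself a consequence of Lemma \ref{CVYn}. This is precisely why the hypothesis $f\in\calC_b^1(\R_+)$ (Lipschitz, not just bounded continuous) is crucial.
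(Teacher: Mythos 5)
Your argument is correct, and at its core it follows the same strategy as the paper: reduce the tree average to generation averages (your Toeplitz step is exactly what Lemma A.2 of \cite{BercuBDSAGP} is invoked for), then control the generation averages through the geometric contraction of the iterated kernel of the single-line chain $(Y_n)$ acting on Lipschitz functions, obtain second moments that are summable in $n$, and conclude almost surely. The difference is one of organization rather than of idea: the paper quotes relation (7) of \cite{Guyon} to express $\E[\overline M_{\G_n}(g)^2]$ through the pair transition $P$, and gets the key bound $|Q^n g(x)|\leq \gamma\alpha\sqrt{\E[\wt a_2^2]}^{\,n-1}(1+|x|)$ from the mean value theorem and the explicit coupling $Q^ng(x)=\E_x[f(Z_n)-f(T)]$ built in the proof of Lemma \ref{CVYn}; you instead re-derive an equivalent second-moment bound from scratch via the decomposition $Z_n(f)=\wt Q^n f(X_1)+\sum_{j\leq n}2^{-j}D_j(\wt Q^{n-j}f)$, obtain the contraction from the Lipschitz coupling together with the invariance of the law of $T$, and finish with Markov plus Borel--Cantelli where the paper sums the expectations directly. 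Your route is self-contained (no external identity for the bifurcating chain is needed) at the price of handling a triangular array of martingale differences; the paper's route is shorter because the second-moment identity is imported.

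Two of your justifications need small repairs, neither of which affects the outcome. First, it is not true that each $X_k$ with $k\in\G_j$ has the law of $Y_{j+1}$: for instance $X_2=a_1X_1+\veps_2$ and $X_3=b_1X_1+\veps_3$ need not be equidistributed; only the uniform mixture over the generation has that law, i.e. $2^{-j}\sum_{k\in\G_j}\E[X_k^2]=\E[Y_{j+1}^2]$, because the random branch $(\kappa_n)$ is independent of the tree. Since your variance estimate only uses the sum over $\G_{j-1}$, this identity is all you need. Second, Lemma \ref{CVYn} gives convergence in law of $Y_n$ to $T$, which by itself yields neither $\E[Y_n^2]\to\E[T^2]$ nor $\sup_n\E[Y_n^2]<\infty$; the uniform second-moment bound has to be taken from the recursion $Y_{n+1}=\wt a_{n+1}Y_n+e_{n+1}$ together with $\E[\wt a_2^2]<1$ (this is precisely the estimate the paper records in \eqref{espY}). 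With these two points stated correctly, your $L^2$ bound for the remainder (of order $2^{-n}\vee\alpha^{2n}$, up to a factor $n$ in the boundary case $2\alpha^2=1$, hence still summable) and the Borel--Cantelli conclusion go through.
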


\begin{Prop} \label{cvcrochet}
 Assume that \H{H1} to \H{H3} are satisfied. Then, we have 
\begin{equation} \label{limcrochet}
 \lim_{n\to\infty} \frac{\langle M\rangle _n}{|\T_{n-1}|} = L \hspace{20pt} \text{ a.s.}
\end{equation}
\noindent where $L$ is the positive definite matrix given by
\begin{equation*} L = 
\E\left[\frac1{(1+T^2)^2} \begin{pmatrix} P(T) & Q(T) \\ Q(T) & R(T) \end{pmatrix} \otimes \begin{pmatrix} T^2 & T \\ T & 1 \end{pmatrix} \right].\end{equation*}

\end{Prop}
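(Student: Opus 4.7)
The plan is to expand the explicit expression \eqref{defcrochetM}--\eqref{defLk} of $\langle M\rangle_n$ and reduce the whole statement to a componentwise application of Lemma \ref{LFGN}. Since $\bigcup_{k=0}^{n-1}\G_k=\T_{n-1}$ and $c_i=1+X_i^2$, one rewrites
$$\frac{\langle M\rangle_n}{|\T_{n-1}|}=\frac{1}{|\T_{n-1}|}\sum_{i\in\T_{n-1}}\frac{1}{(1+X_i^2)^2}\begin{pmatrix} P(X_i) & Q(X_i) \\ Q(X_i) & R(X_i) \end{pmatrix}\otimes\begin{pmatrix} X_i^2 & X_i \\ X_i & 1 \end{pmatrix}.$$
Each of the sixteen scalar entries of the matrix-valued integrand has the form $p(x)/(1+x^2)^2$ with $p$ a polynomial of degree at most four, hence is bounded and of class $\calC^1$ on $\R$ with bounded derivative. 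In particular every entry lies in $\calC_b^1(\R_+)$, and applying Lemma \ref{LFGN} at generation $n-1$ to each of the sixteen scalar functions yields the a.s.\ entrywise convergence $\langle M\rangle_n/|\T_{n-1}|\to L$.

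It remains to show that $L$ is positive definite. Writing $\begin{pmatrix} t^2 & t \\ t & 1 \end{pmatrix}=w(t)w(t)^t$ with $w(t)=\binom{t}{1}$, and using the standard identity $u^t(A\otimes ww^t)u = v^t A v$ (valid for symmetric $A$, with $v=U^tw$ when $u=\mathrm{vec}(U)$), a short algebraic computation gives, for every $u=(u_1,u_2,u_3,u_4)^t\in\R^4$,
$$u^t L u = \E\!\left[\frac{v(T)^t A(T) v(T)}{(1+T^2)^2}\right],\qquad v(t)=\binom{u_1 t+u_2}{u_3 t+u_4},\quad A(t)=\begin{pmatrix} P(t) & Q(t)\\ Q(t) & R(t)\end{pmatrix}.$$
A direct discriminant computation---combining the strict inequality $\rho_{cd}^2<\sigma_c^2\sigma_d^2$ of \H{H3} with $\rho_{ab}^2\le\sigma_a^2\sigma_b^2$ and the AM--GM bound $2|\rho_{ab}\rho_{cd}|\le\sigma_a^2\sigma_d^2+\sigma_b^2\sigma_c^2$---shows that $\det A(t)>0$ for every $t\in\R$, so $A(t)$ is positive definite. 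The integrand is therefore nonnegative and strictly positive whenever $v(T)\neq 0$. For $u\neq 0$, the equation $v(t)=0$ has at most one solution in $\R$, and the non-degeneracy of $T$ provided by Lemma \ref{CVYn} forces $\Pro(v(T)\neq 0)>0$, whence $u^t L u>0$.

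The only delicate step, in my view, is this last positive-definiteness argument: the right-hand Kronecker factor $w(t)w(t)^t$ is rank-one for every $t$, so strict positivity of $L$ cannot be read off pointwise and must be extracted from the averaging against the non-degenerate law of $T$, which is precisely the role played by Lemma \ref{CVYn}. The convergence part is essentially automatic once the entries of the integrand are identified as bounded $\calC^1$ functions, so no truncation argument or additional moment control is needed beyond what Lemma \ref{LFGN} already provides.
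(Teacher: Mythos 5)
Your proof is correct, and while the convergence half coincides with the paper's argument (both reduce $\langle M\rangle_n/|\T_{n-1}|$ to averages over $\T_{n-1}$ of bounded $\calC^1$ entries of the form $p(x)/(1+x^2)^2$ and apply Lemma \ref{LFGN} entrywise), your positive-definiteness argument takes a genuinely different route. The paper exploits the affine structure $P(T)=\sigma_a^2T^2+\sigma_c^2$, $Q(T)=\rho_{ab}T^2+\rho_{cd}$, $R(T)=\sigma_b^2T^2+\sigma_d^2$ to split
$$L=\begin{pmatrix}\sigma_a^2&\rho_{ab}\\ \rho_{ab}&\sigma_b^2\end{pmatrix}\otimes\E[T^2\calC]+\begin{pmatrix}\sigma_c^2&\rho_{cd}\\ \rho_{cd}&\sigma_d^2\end{pmatrix}\otimes\E[\calC],
\qquad \calC=\frac1{(1+T^2)^2}\begin{pmatrix}T^2&T\\ T&1\end{pmatrix},$$
then shows $\E[\calC]$ is positive definite (trace positive, determinant strictly positive by Cauchy--Schwarz, equality being excluded by the non-degeneracy of $T$ from Lemma \ref{CVYn}) and $\E[T^2\calC]$ positive semidefinite, and concludes using the fact that Kronecker products of positive (semi)definite matrices are positive (semi)definite, the strict inequality $\rho_{cd}^2<\sigma_c^2\sigma_d^2$ making the second summand definite. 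You instead evaluate the quadratic form $u^tLu$ directly, prove that $A(t)$ is positive definite for every fixed $t$ via the discriminant and the AM--GM bound, and invoke the non-degeneracy of $T$ only to guarantee $\Pro(v(T)\neq0)>0$; in other words, you resolve the rank-one degeneracy of the right Kronecker factor through the law of $T$ rather than through $\E[\calC]$. Both arguments rest on the same two ingredients (the strict noise inequality of \H{H3} and the non-degeneracy of $T$), and both cover the deterministic-coefficient case $\rho_{ab}^2=\sigma_a^2\sigma_b^2$, since in your computation $\det A(t)\geq\sigma_c^2\sigma_d^2-\rho_{cd}^2>0$ even when $\sigma_a=\sigma_b=0$. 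Your version is more elementary and self-contained (no Kronecker positivity facts, only scalar discriminants), whereas the paper's decomposition more transparently separates the contributions of the coefficient covariance and the noise covariance; either is a complete proof.
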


\noindent Our first result deals with the almost sure convergence of our WLS estimator $\wh\theta_n$.

\begin{Theo} \label{CVpstheta}
 Assume that \H{H1} to \H{H5} satisfied. Then, $\wh\theta_n$ converges almost surely 
to $\theta$ with the rate of convergence
\begin{equation*}
\|\wh\theta_n-\theta\|^2 =  \calO\left(\frac{n}{|\T_{n-1}|}\right) \hspace{20pt} \text{ a.s.} \label{rate}
\end{equation*}
\noindent In addition, we also have the quadratic strong law
\begin{equation}\label{quadratic1}
 \lim_{n\to\infty} \frac1n \sum_{k=1}^n |\T_{k-1}|(\wh \theta_k -\theta)^t \Lambda (\wh \theta_k -\theta) = tr(\Lambda^{-1/2}L\Lambda^{-1/2}) \hspace{20pt} \text{ a.s.}\\
\end{equation}\noindent where
\begin{equation}\label{defA}
\Lambda = I_2\otimes C \hspace{20pt} \text{ and } \hspace{20pt} C = \E\left[\frac1{1+T^2}\begin{pmatrix} T^2 & T \\ T & 1 \end{pmatrix}\right].
\end{equation}

\end{Theo}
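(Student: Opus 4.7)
The proof rests on the identity $\wh\theta_n - \theta = \Sigma_{n-1}^{-1} M_n$ from \eqref{difftheta}, and combines a strong law for the normalization $\Sigma_n$ with standard limit theorems for the vector martingale $M_n$ of Section \ref{martappro}, whose bracket is already controlled by Proposition \ref{cvcrochet}.

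\textbf{Step 1: asymptotics of $\Sigma_n$.} Writing $\Sigma_n = I_2 \otimes S_n$, the entries of $|\T_n|^{-1} S_n$ are empirical averages of the three functions $x^2/(1+x^2)$, $x/(1+x^2)$ and $1/(1+x^2)$, each of which belongs to $\calC_b^1(\R_+)$. Applying Lemma \ref{LFGN} componentwise gives
$$\frac{\Sigma_n}{|\T_n|} \longrightarrow \Lambda \quad \text{and} \quad |\T_n|\,\Sigma_n^{-1} \longrightarrow \Lambda^{-1} \as$$

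\textbf{Step 2: rate of convergence.} By Proposition \ref{cvcrochet}, $\lambda_{\max}(\langle M\rangle_n) = \calO(|\T_{n-1}|)$ and $\log\det\langle M\rangle_n = \calO(n)$ a.s. The moment hypothesis \H{H5} (with $\alpha>4$) ensures the integrability requirements of the strong law of large numbers for vector-valued martingales (e.g.~Corollary 1.3.25 of \cite{Duflo}), which then delivers
$$\|M_n\|^2 = \calO\bigl(|\T_{n-1}|\log|\T_{n-1}|\bigr) = \calO(n|\T_{n-1}|) \as$$
Combining this with Step 1 through $\wh\theta_n - \theta = \Sigma_{n-1}^{-1} M_n$ immediately gives $\|\wh\theta_n - \theta\|^2 \leq \|\Sigma_{n-1}^{-1}\|^2 \|M_n\|^2 = \calO(n/|\T_{n-1}|)$ a.s., which is the announced rate.

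\textbf{Step 3: quadratic strong law.} I would invoke the QSL for vector-valued martingales (as used in Theorem 3 of \cite{BercuBDSAGP} for the BAR setting), whose hypotheses are verified by Proposition \ref{cvcrochet} together with the Lindeberg-type bounds from \H{H4}--\H{H5}. This gives
$$\lim_{n\to\infty} \frac{1}{n}\sum_{k=1}^n \frac{M_k^t \Lambda^{-1} M_k}{|\T_{k-1}|} = \mathrm{tr}\bigl(\Lambda^{-1/2} L \Lambda^{-1/2}\bigr) \as$$
Substituting $M_k = \Sigma_{k-1}(\wh\theta_k - \theta)$, the quantity $|\T_{k-1}|(\wh\theta_k-\theta)^t\Lambda(\wh\theta_k-\theta) = |\T_{k-1}| M_k^t \Sigma_{k-1}^{-1}\Lambda\Sigma_{k-1}^{-1} M_k$ is, by Step 1, asymptotically equivalent to $M_k^t\Lambda^{-1}M_k / |\T_{k-1}|$, and transferring this equivalence to the Cesàro sum yields \eqref{quadratic1}.

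\textbf{Main obstacle.} The delicate point is precisely the Cesàro transfer in Step 3. Using only the pointwise convergence $\Sigma_{k-1}/|\T_{k-1}| \to \Lambda$ together with the rate from Step 2 yields a per-term error of order $o(k)$, which sums to $o(n^2)$ and hence does not vanish after division by $n$. To close the argument, one must combine it with a \emph{quantitative} rate for $\Sigma_{k-1}/|\T_{k-1}| - \Lambda$ (of order $\calO(|\T_{k-1}|^{-1/2})$ up to logarithms, obtained from the martingale decomposition of $S_n - |\T_n|\,C$ and another application of the vector SLLN); the resulting geometric factor $|\T_{k-1}|^{-1/2}\sim 2^{-k/2}$ makes $\sum k \cdot 2^{-k/2}$ convergent and renders the remainder genuinely negligible. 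Verifying the Lindeberg condition for the martingale QSL under \H{H5} is the second technical point, and it is precisely there that the moment assumption $\alpha>4$ is used.
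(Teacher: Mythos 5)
Your Steps 1 and 2 are essentially sound and consistent with the paper: the convergence \eqref{CVSigman} is obtained exactly as you describe from Lemma \ref{LFGN}, and although the paper derives the rate from $\calV_n=M_n^t\Sigma_{n-1}^{-1}M_n=\calO(n)$ (via the decomposition \eqref{egaliteVn} and Lemma \ref{lemCVVnAn}) rather than from a direct strong law applied to $M_n$, a bound of the type $\|M_n\|^2=\calO(n|\T_{n-1}|)$ is legitimate under \H{H4}--\H{H5}; indeed the paper proves the sharper $\|M_n\|^2=o(|\T_n|n^\delta)$ for all $\delta>1/2$ in Lemma \ref{delta} via Wei's lemma. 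The genuine gap is in Step 3. The ``QSL for vector-valued martingales'' you invoke is not an off-the-shelf result: Theorem 3 of \cite{BercuBDSAGP} is the analogous statement for the BAR estimator, and there, as here, it is precisely what has to be proved; its proof is the Riccati-type identity \eqref{egaliteVn}, $\calV_{n+1}+\calA_n=\calV_1+\calB_{n+1}+\calW_{n+1}$, combined with $\calW_n/n\to\frac12 tr(\Lambda^{-1/2}L\Lambda^{-1/2})$, with $\calB_n=o(n)$ (which requires the comparison $L_n\leq\alpha\Psi_n\Psi_n^t$ to get $\langle\calB\rangle_n\leq4\alpha\calA_n$), and with Lemma \ref{delta} giving $\calV_n=o(n^\delta)$ so that $\calA_n/n$ inherits the limit. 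By citing a quadratic strong law whose precise statement and hypotheses you never give, you are assuming the substance of the theorem rather than proving it.

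Your proposed repair of the ``main obstacle'' is, moreover, both unsupported and unnecessary. The rate $\|\Sigma_k/|\T_k|-\Lambda\|=\calO(|\T_k|^{-1/2})$ up to logarithms does not follow from ``the martingale decomposition of $S_n-|\T_n|\,C$'': $S_n-|\T_n|\,C$ is not a martingale (the increment $\sum_{k\in\G_n}c_k^{-1}\Phi_k\Phi_k^t-|\G_n|C$ is not centered given $\calF_{n-1}$), and Lemma \ref{LFGN} together with the $L^2$ estimates of Section 7 only yields almost sure convergence of the tree averages, with no rate; obtaining such a rate would be a separate and nontrivial piece of work that the paper never carries out. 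And no rate is needed: the summands are nonnegative quadratic forms, and since $|\T_{k-1}|\Sigma_{k-1}^{-1}\to\Lambda^{-1}$ a.s., for $k$ large each term $M_k^t\Sigma_{k-1}^{-1}M_k$ is comparable to $M_k^t\Lambda^{-1}M_k/|\T_{k-1}|$ within multiplicative factors tending to one; once one knows the partial sums are $\calO(n)$ (the paper gets this from $\calA_n=\calO(n)$, $A_k\to\frac12 I_4$ and $\calV_k=o(k^\delta)$), a Toeplitz-type comparison yields \eqref{eqAn/n}, \eqref{elementQSL} and hence \eqref{quadratic1}. In short, what is missing from Step 3 is exactly the content you skipped: either the decomposition \eqref{egaliteVn} with the two lemmas of Section 8, or a precisely stated general martingale QSL whose Lyapunov/Lindeberg hypotheses you actually verify under \H{H5}, followed by the multiplicative Ces\`aro transfer rather than an additive one based on an unproved rate for $\Sigma_n$.
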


\noindent Our second result concerns the almost sure asymptotic properties of our WLS variance and covariance estimators $\wh \eta_n$, $\wh \zeta_n$ and $\wh \nu_n$. Let
\begin{equation*}
\eta_n  = Q_{n-1}^{-1} \sum_{k\in \T_{n-1}} \frac{1}{d_k} V_{2k}^2 \psi_k, \hspace{40pt}
\zeta_n = Q_{n-1}^{-1} \sum_{k\in \T_{n-1}} \frac{1}{d_k} V_{2k+1}^2 \psi_k,
\end{equation*}
\begin{equation*}
\nu_n = Q_{n-1}^{-1}\sum_{k\in\T_{n-1}}\frac1{d_k} V_{2k}V_{2k+1}\psi_k.
\end{equation*}

\begin{Theo}\label{CVpsvar}
 Assume that \H{H1} to \H{H5} are satisfied. Then, $\wh \eta_n$ and $ \wh \zeta_n$ converge almost surely to $\eta$ and 
$\zeta$ respectively. More precisely,
\begin{align}
\|\wh \eta_{n} - \eta_{n}\|  &= \calO\left(\frac{n}{|\T_{n-1}|}\right) \hspace{20pt} \text{ a.s.}\label{vitesseeta} \\
\|\wh \zeta_{n} - \zeta_{n}\| &= \calO\left(\frac{n}{|\T_{n-1}|}\right) \hspace{20pt} \text{ a.s.}\label{vitesseetad} 
\end{align}

\noindent In addition, $\wh \nu_n$ converges almost surely to $\nu$ with
\begin{equation}
\|\wh \nu_n - \nu_n\| = \calO\left(\frac n{|\T_{n-1}|}\right) \hspace{20pt} \text{ a.s.} \label{vitesserho}
\end{equation}

\end{Theo}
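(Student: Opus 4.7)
The plan is to reduce everything to Theorem \ref{CVpstheta} and a martingale strong law of large numbers by decomposing $\wh\eta_n-\eta_n$ through the identity $\wh V_{2k}=V_{2k}-\alpha_n X_k-\gamma_n$, where $\alpha_n:=\wh a_n-a$ and $\gamma_n:=\wh c_n-c$. Squaring gives $\wh V_{2k}^2-V_{2k}^2=-2V_{2k}(\alpha_n X_k+\gamma_n)+(\alpha_n X_k+\gamma_n)^2$, so that after multiplying by $\psi_k/d_k$, summing over $\T_{n-1}$, and premultiplying by $Q_{n-1}^{-1}$ we obtain a linear-in-$(\alpha_n,\gamma_n)$ piece and a quadratic-in-$(\alpha_n,\gamma_n)$ piece. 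Using Lemma \ref{LFGN} on the bounded $C^1$ functions $x\mapsto x^a/(1+x^2)^2$ with $a\in\{0,2,4\}$ shows that $Q_{n-1}/|\T_{n-1}|$ converges almost surely to a positive definite matrix, so $\|Q_{n-1}^{-1}\|=\calO(|\T_{n-1}|^{-1})$ a.s.

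The quadratic piece is bounded directly. Each sum $\sum_{k\in\T_{n-1}} X_k^m\psi_k/d_k$ with $m\le 2$ is $\calO(|\T_{n-1}|)$ since $X_k^m/d_k$ is uniformly bounded, and Theorem \ref{CVpstheta} provides $\alpha_n^2+\gamma_n^2=\calO(n/|\T_{n-1}|)$, yielding a contribution of $\calO(n/|\T_{n-1}|)$ a.s. For the linear piece, introduce, for $j\in\{0,1\}$, the vector process
\begin{equation*}
\Pi_n^{(j)}=\sum_{k\in\T_{n-1}}\frac{V_{2k}X_k^j}{d_k}\psi_k.
\end{equation*}
Because $\E[V_{2k}\mid\calF_{n-1}]=0$ for $k\in\G_{n-1}$ and sibling noises are conditionally independent by \H{H3}, $(\Pi_n^{(j)})$ is a locally square integrable vector martingale in the generation-indexed filtration. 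Using $\E[V_{2k}^2\mid\calF_{n-1}]\le\max(\sigma_a^2,\sigma_c^2)c_k$ together with the uniform bound $c_k X_k^{2j}\psi_k\psi_k^t/d_k^2=\calO(1)$ gives $\langle\Pi^{(j)}\rangle_n=\calO(|\T_{n-1}|)$ a.s. The martingale SLLN (applicable under \H{H5}) yields $\|\Pi_n^{(j)}\|^2=\calO(n|\T_{n-1}|)$ a.s., and combined with $|\alpha_n|+|\gamma_n|=\calO(\sqrt{n/|\T_{n-1}|})$ and $\|Q_{n-1}^{-1}\|=\calO(|\T_{n-1}|^{-1})$, the linear piece is also $\calO(n/|\T_{n-1}|)$ a.s. This proves \eqref{vitesseeta}; \eqref{vitesseetad} is identical with $(b,d)$ in place of $(a,c)$, and \eqref{vitesserho} follows from the analogous expansion
\begin{equation*}
\wh V_{2k}\wh V_{2k+1}-V_{2k}V_{2k+1}=-V_{2k}(\beta_n X_k+\delta_n)-V_{2k+1}(\alpha_n X_k+\gamma_n)+(\alpha_n X_k+\gamma_n)(\beta_n X_k+\delta_n),
\end{equation*}
with $\beta_n:=\wh b_n-b$, $\delta_n:=\wh d_n-d$, treating each resulting term exactly as above.

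To upgrade these rates to the stated almost sure convergences $\wh\eta_n\to\eta$, $\wh\zeta_n\to\zeta$, $\wh\nu_n\to\nu$, I still need to show $\eta_n\to\eta$, $\zeta_n\to\zeta$, $\nu_n\to\nu$ a.s. Writing $V_{2k}^2=\eta^t\psi_k+v_{2k}$ gives $\eta_n-\eta=Q_{n-1}^{-1}\sum_{k\in\T_{n-1}}v_{2k}\psi_k/d_k$, and the sum is again a vector-valued martingale with increasing process $\calO(|\T_{n-1}|)$ thanks to the bound $\E[v_{2k}^2\mid\calF_{n-1}]\le Cd_k$ established from \H{H4}, so the same martingale SLLN produces $\|\eta_n-\eta\|=\calO(\sqrt{n/|\T_{n-1}|})\to 0$ a.s.; the arguments for $\zeta_n$ and $\nu_n$ are identical. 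The main obstacle I anticipate is not the decomposition itself but a careful bookkeeping of the martingale structure of $\Pi_n^{(j)}$: verifying the conditional-independence assumption \H{H3} is used exactly as in the ordering of $\xi_n$ in Section \ref{martappro}, and that \H{H5} indeed provides the moment control (beyond second moments) needed to apply the vector-valued martingale SLLN with its logarithmic penalty and so match the sharp $n/|\T_{n-1}|$ rate.
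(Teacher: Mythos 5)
There is a genuine gap, and it comes from the definition of the residuals. In the paper, for $k\in\G_l$ the residual is $\wh V_{2k}=X_{2k}-\wh a_l X_k-\wh c_l$, i.e.\ it uses the estimator of the \emph{same generation} $l$ as $k$ (this is what the statement ``for all $k\in\G_n$, $\wh V_{2k}=X_{2k}-\wh a_n X_k-\wh c_n$'' means, and it is the identity the paper's proof uses: $\wh V_{2k}-V_{2k}=-(\wh a_l-a,\,\wh c_l-c)\Phi_k$ for $k\in\G_l$). Your decomposition $\wh V_{2k}=V_{2k}-\alpha_n X_k-\gamma_n$ with the \emph{final} estimates $\alpha_n=\wh a_n-a$, $\gamma_n=\wh c_n-c$ factored out of the whole sum over $\T_{n-1}$ therefore does not describe the estimator $\wh\eta_n$ of the theorem; it describes a different (albeit natural) variant. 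For the actual $\wh\eta_n$, the coefficients $(\wh a_l-a,\wh c_l-c)$ vary with the generation inside the sum, so the cross term cannot be written as $\alpha_n\Pi_n^{(1)}+\gamma_n\Pi_n^{(0)}$; it is instead the martingale transform $P_n=\sum_l\sum_{k\in\G_l}(\wh V_{2k}-V_{2k})V_{2k}\psi_k/d_k$ whose increments are weighted by the $\calF_l$-measurable errors, and whose increasing process is controlled by $\sum_{l=0}^{n-1}|\G_l|\bigl((\wh a_l-a)^2+(\wh c_l-c)^2\bigr)$; the same sum controls the quadratic term.

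This is where the second, related problem appears: that sum is $\calO(n)$ only by virtue of the quadratic strong law \eqref{quadratic1}, which is exactly what the paper invokes. The pointwise rate $\|\wh\theta_l-\theta\|^2=\calO(l/|\T_{l-1}|)$ that you rely on gives only $\sum_{l\le n-1}|\G_l|\|\wh\theta_l-\theta\|^2=\calO(n^2)$, hence $\|\wh\eta_n-\eta_n\|=\calO(n^2/|\T_{n-1}|)$, which is weaker than \eqref{vitesseeta}. So, to prove the stated result you must replace the factor-out-$(\alpha_n,\gamma_n)$ step by the generation-wise decomposition and use \eqref{quadratic1} (both for the squared term and for $\langle P\rangle_n$), exactly as in Section \ref{demoCVpsvar}; the same remark applies to \eqref{vitesseetad} and \eqref{vitesserho}. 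The remaining ingredients of your proposal are sound and match the paper: the convergence $Q_n/|\T_n|\to D$ with $D$ positive definite via Lemma \ref{LFGN}, and the treatment of $\eta_n-\eta$, $\zeta_n-\zeta$, $\nu_n-\nu$ as martingales with increasing process $\calO(|\T_{n-1}|)$ (using \H{H3}--\H{H4}) followed by the martingale strong law.
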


\begin{Rem}\label{remrate}
We also have the almost sure rates of convergence
$$\|\wh \eta_{n} - \eta\|^2 =\calO\left(\frac{n}{|\T_{n-1}|}\right),~~ \|\wh \zeta_{n} - \zeta\|^2 =\calO\left(\frac{n}{|\T_{n-1}|}\right),~~ \|\wh \nu_{n} - \nu\|^2 =\calO\left(\frac{n}{|\T_{n-1}|}\right) ~~~ a.s.$$
\end{Rem}

\noindent Our last result is devoted to the asymptotic normality of our WLS estimators $\wh\theta_n$, $\wh \eta_n$, $\wh \zeta_n$ and $\wh \nu_n$.

\begin{Theo}\label{TCL}
 Assume that \H{H1} to \H{H5} are satisfied. Then, we have the asymptotic normality
\begin{equation} \label{TCLtheta}
 \sqrt{|\T_{n-1}|}(\wh\theta_n - \theta) \liml \calN(0,\Lambda^{-1}L\Lambda^{-1}).
\end{equation}

\noindent In addition, we also have
\begin{eqnarray}
\sqrt{|\T_{n-1}|}\left(\wh \eta_{n} - \eta\right)  \liml \calN(0,D^{-1} M_{ac} D^{-1}) \label{TCLeta},\\
\sqrt{|\T_{n-1}|}\left(\wh \zeta_{n} - \zeta\right) \liml \calN(0,D^{-1} M_{bd} D^{-1}) \label{TCLetad},
\end{eqnarray}
\noindent where
$$D = \E\left[\frac1{(1+T^2)^2} \begin{pmatrix} T^4 & T^2 \\ T^2 & 1 \end{pmatrix} \right],$$
$$M_{ac} = \E\left[\frac{(\mu_a^4-\sigma_a^4)T^4+4\sigma_a^2\sigma_c^2T^2+(\mu_c^4-\sigma_c^4)}{(1+T^2)^4} \begin{pmatrix} T^4 & T^2 \\ T^2 & 1 \end{pmatrix} \right],$$
$$M_{bd} = \E\left[\frac{(\mu_b^4-\sigma_b^4)T^4+4\sigma_b^2\sigma_d^2T^2+(\mu_d^4-\sigma_d^4)}{(1+T^2)^4} \begin{pmatrix} T^4 & T^2 \\ T^2 & 1 \end{pmatrix} \right].$$
Finally,
\begin{equation} 
\sqrt{|\T_{n-1}|}  \left(\wh \nu_n - \nu\right) \liml \calN\left(0,D^{-1} H D^{-1} \right) \label{TCLrho}
\end{equation}

\noindent where 
$$H = \E\left[\frac{(\nu_{ab}^2-\rho_{ab}^2)T^4 + (\sigma_a^2 \sigma_d^2 + \sigma_b^2 \sigma_c^2+2\rho_{ab}\rho_{cd})T^2 + (\nu_{cd}^2 - \rho_{cd}^2)}{(1+T^2)^4} \begin{pmatrix} T^4 & T^2 \\ T^2 & 1 \end{pmatrix}\right].$$

\end{Theo}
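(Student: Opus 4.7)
The overall strategy is to apply the central limit theorem for locally square-integrable vector-valued martingales (as in \cite{Duflo,HallHeyde}) to each of the four estimators, after re-expressing the centered estimator as a product of an inverse Gram-type matrix (which converges almost surely) and a martingale (to which the CLT applies). Slutsky's lemma then closes each case.

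For $\wh\theta_n$, I would start from \eqref{difftheta} to write
$$
\sqrt{|\T_{n-1}|}\,(\wh\theta_n-\theta)=\left(\frac{\Sigma_{n-1}}{|\T_{n-1}|}\right)^{\!-1}\frac{M_n}{\sqrt{|\T_{n-1}|}}.
$$
Applying Lemma \ref{LFGN} to the $\calC_{b}^1(\R_+)$ functions $x\mapsto x^2/(1+x^2)$, $x/(1+x^2)$ and $1/(1+x^2)$ yields $\Sigma_{n-1}/|\T_{n-1}|\to\Lambda$ almost surely. Together with Proposition \ref{cvcrochet}, the problem then reduces to proving $M_n/\sqrt{|\T_{n-1}|}\liml\calN(0,L)$. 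For this I would invoke the martingale CLT, whose bracket hypothesis is Proposition \ref{cvcrochet}, and whose remaining hypothesis I would verify in the Lyapunov form
$$
\frac{1}{|\T_{n-1}|^{1+\delta/2}}\sum_{k=1}^{n}\E\bigl[\|\Psi_{k-1}\xi_k\|^{2+\delta}\bigm|\calF_{k-1}\bigr]\limP 0,
$$
with $\delta=\alpha/2-2>0$ made available by \H{H5}. Using $(X_i^2+1)/c_i^2=1/c_i\leq 1$, the inner conditional moment reduces to controlling $\E[|V_{2i}|^{\alpha/2}+|V_{2i+1}|^{\alpha/2}\mid\calF_{k-1}]$, which under \H{H5} is $\mathcal{O}(1+|X_i|^{\alpha/2})$; the resulting double sum is of order $|\T_{n-1}|$ by Lemma \ref{LFGN} and is thus negligible against $|\T_{n-1}|^{1+\delta/2}$.

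For $\wh\eta_n$, $\wh\zeta_n$ and $\wh\nu_n$, Theorem \ref{CVpsvar} together with Remark \ref{remrate} already gives $\sqrt{|\T_{n-1}|}(\wh\eta_n-\eta_n)\to 0$ a.s., and likewise for $\wh\zeta_n-\zeta_n$ and $\wh\nu_n-\nu_n$. By Slutsky it is enough to establish the CLT for the auxiliary estimators. For $\eta_n$ I would write
$$
\eta_n-\eta=Q_{n-1}^{-1}N_n^{(\eta)},\qquad N_n^{(\eta)}=\sum_{k\in\T_{n-1}}\frac{1}{d_k}v_{2k}\psi_k,
$$
with $v_{2k}=V_{2k}^2-\E[V_{2k}^2|\calF_{n-1}]$ for $k\in\G_{n-1}$. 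The process $(N_n^{(\eta)})$ is a locally square-integrable $\F$-martingale, and the explicit formula for $\E[v_{2k}^2|\calF_{n-1}]$ stated in Section 3, combined with Lemma \ref{LFGN} applied to the bounded $\calC^1$ functions $x^{2j}/(1+x^2)^2$ for $j=0,1,2$, yields $\langle N^{(\eta)}\rangle_n/|\T_{n-1}|\to M_{ac}$ a.s. Lemma \ref{LFGN} also gives $Q_{n-1}/|\T_{n-1}|\to D$ a.s. A Lyapunov condition analogous to the one above is verified from \H{H5} (it suffices to control moments of $V_{2k}$ up to some order $4+2\delta'<\alpha$, available because $\alpha>4$). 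Slutsky then delivers \eqref{TCLeta}, and \eqref{TCLetad} is obtained by a symmetric argument. For \eqref{TCLrho} I would proceed identically, replacing $v_{2k}$ by $w_{2k}=V_{2k}V_{2k+1}-\E[V_{2k}V_{2k+1}|\calF_{n-1}]$ and using the bound on $\E[w_{2k}^2|\calF_{n-1}]$ derived at the end of Section 3, the limiting conditional-variance matrix being exactly $H$.

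The principal obstacle, as I see it, is the verification of the Lyapunov condition uniformly in $n$. Although \H{H5} supplies pointwise conditional moment bounds, each martingale increment at generation $k$ aggregates $|\G_{k-1}|=2^{k-1}$ contributions indexed by the previous generation, so one must carefully estimate the conditional $(2+\delta)$-moment of this aggregate and then check that the sum over $k\leq n$ stays of order $|\T_{n-1}|=o(|\T_{n-1}|^{1+\delta/2})$. Once this tail estimate is in hand, the conclusion follows by combining the already-established almost sure limits (Proposition \ref{cvcrochet}, Lemma \ref{LFGN}, Theorem \ref{CVpsvar}) with Slutsky's lemma.
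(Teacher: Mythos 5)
Your overall architecture (write each centered estimator as an inverse Gram matrix times a martingale, prove an a.s.\ limit for the Gram matrix, a CLT for the martingale, and finish with Slutsky) is the same as the paper's, but the key step --- the CLT for the martingale part --- is carried out on the wrong martingale, and the Lyapunov verification you sketch does not hold. You keep the generation-indexed martingale $M_n=\sum_{k=1}^n\Psi_{k-1}\xi_k$ and claim that $\sum_{k\le n}\E\bigl[\|\Psi_{k-1}\xi_k\|^{2+\delta}\bigm|\calF_{k-1}\bigr]=\calO(|\T_{n-1}|)$. That estimate is false: $\Psi_{k-1}\xi_k$ aggregates $|\G_{k-1}|$ conditionally independent, centered contributions, so by a Rosenthal-type inequality its conditional $(2+\delta)$-moment is of order $|\G_{k-1}|^{1+\delta/2}$ (your reduction to $\E[|V_{2i}|^{\alpha/2}+|V_{2i+1}|^{\alpha/2}\mid\calF_{k-1}]$ amounts to pulling the power inside the sum, which is not legitimate). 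Summing over $k$, the Lyapunov ratio behaves like $(|\G_{n-1}|/|\T_{n-1}|)^{1+\delta/2}\to(1/2)^{1+\delta/2}>0$, so the condition fails. No choice of exponent can rescue this: since $\E[\|\Psi_{n-1}\xi_n\|^2\mid\calF_{n-1}]$ is of order $|\G_{n-1}|\sim|\T_{n-1}|/2$, the last increment alone carries a fixed fraction of the total variance, so even the Lindeberg condition fails for the generation-wise filtration. The same objection applies verbatim to your treatment of $\eta_n$, $\zeta_n$ and $\nu_n$, whose increments you also index by generations.

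The paper's proof avoids exactly this obstacle by changing the filtration: it uses the sister pair-wise filtration $\calG_k=\sigma(X_1,(X_{2i},X_{2i+1}),1\le i\le k)$ and the triangular array $M^{(n)}_k=|\T_n|^{-1/2}\sum_{i=1}^k D_i$, where $D_i$ is the contribution of the single pair $(2i,2i+1)$. Each increment is then uniformly $\calO(|\T_n|^{-1/2})$, the bracket evaluated at $t_n=|\T_n|$ converges a.s.\ to $L$ (resp.\ $M_{ac}$, $M_{bd}$, $H$) by Proposition \ref{cvcrochet} and Lemma \ref{LFGN}, and the Lyapunov condition is immediate --- with exponent $4$ for $\theta$ using the fourth-moment bounds, and exponent $\alpha/2$, $\alpha>4$ from \H{H5}, for the variance and covariance estimators --- after which Theorem 2.1.9 of Duflo and Slutsky's lemma conclude. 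Your remaining scaffolding ($\Sigma_{n-1}/|\T_{n-1}|\to\Lambda$ and $Q_{n-1}/|\T_{n-1}|\to D$ a.s., and the reduction of $\wh\eta_n,\wh\zeta_n,\wh\nu_n$ to $\eta_n,\zeta_n,\nu_n$ via Theorem \ref{CVpsvar}) is correct and coincides with the paper; the missing idea is precisely the re-indexation by individual pairs, which restores the asymptotic negligibility of the martingale increments that your generation-wise Lyapunov argument cannot provide.
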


\noindent The rest of the paper is dedicated to the proof of our main results.


\section{Proof of Lemma \ref{CVYn}} \label{preuve lemme CVYn}


We can reformulate \eqref{defYn1} and \eqref{defYn2} as
$$Y_n = \wt a_n \wt a_{n-1}  \hdots \wt a_2 Y_1 + \sum_{k=2}^{n-1} \wt a_n \wt a_{n-1} \hdots \wt a_{k+1} e_k 
      + e_n.$$

\noindent We already made the assumption that both $(a_n,b_n)_{n\geq1}$ and $(\veps_{2n},\veps_{2n+1})_{n\geq1}$ are i.i.d. and that those two sequences are independent. Consequently, the couples $(\wt a_k,e_k)$ and $(\wt a_{n-k+2},e_{n-k+1})$ share the same distribution. Hence, for all $n\geq2$, $Y_n$ has the same distribution than the random variable
\begin{align*}
 Z_n &= \wt a_2 \hdots \wt a_n Y_1 + \sum_{k=2}^{n-1} \wt a_2 \wt a_3 \hdots \wt a_{n-k+1} e_{n-k+2} 
    + e_2,\\
	      &= \wt a_2 \hdots \wt a_n Y_1 + \sum_{k=3}^{n} \wt a_2 \wt a_3 \hdots \wt a_{k-1} e_{k} 
    + e_2.
\end{align*}

\noindent For the sake of simplicity, we will denote 
\begin{equation}Z_n = \wt a_2 \hdots \wt a_n Y_1 + \sum_{k=2}^{n} \wt a_2 \wt a_3 \hdots \wt a_{k-1} e_{k}.\label{defZn}\end{equation}
On the first hand,
$\E[\wt a_2 \wt a_3 \hdots \wt a_n Y_1] = \E[\wt a_2]^{n-1} \E[Y_1]$
and since 
$$\left|\E[\wt a_2]\right| = \left|\frac{a+b}2\right| <1$$
this immediately leads to
$$\lim_{n\to\infty} \wt a_2 \wt a_3 \hdots \wt a_n Y_1 = 0 \as$$
On the other hand, let $T_n$ be defined as
$$T_n = \sum_{k=2}^{n} \wt a_{2} \wt a_{3} \ldots \wt a_{k-1} e_k$$
and $T$ given by
$$T= \sum_{k=2}^{\infty} \wt a_{2} \wt a_{3} \ldots \wt a_{k-1} e_k.$$ 
We have
\begin{align*}
\E[|T-T_n|] &= \E\left[\left|\sum_{k=n+1}^{\infty} \wt a_{2} \wt a_{3} \ldots \wt a_{k-1} e_k\right|\right],\\
	&\leq \sum_{k=n+1}^{\infty}\E\left[\left| \wt a_{2} \wt a_{3} \ldots \wt a_{k-1} e_k\right|\right],\\
	&\leq \E[|e_2|]\sum_{k=n+1}^{\infty}\E\left[\left| \wt a_{2}\right|\right]^{k-2}.
\end{align*}
In addition, $\E[a_n^2] < 1$ and $\E[b_n^2] < 1$ which leads to $\E[\wt a_n^2] < 1$ and $\E[|\wt a_n|] < 1$. Consequently,
$$\E[|T-T_n|] \leq \E\left[\left| \wt a_{2}\right|\right]^{n-1}\frac{\E[|e_2|]}{1-\E\left[\left| \wt a_{2}\right|\right]}.$$
This proves that $T_n \limnorm T$ which immediately implies that
$$T_n \liml T \hspace{20pt} \text{ and } \hspace{20pt} Y_n \liml T.$$
Moreover, we can easily see that \H{H1} allows us to say that $\E[T^{2}]<\infty$ thanks to the Cauchy-Schwarz inequality. It only remains to prove that $T$ is not degenerate. First, we easily have, since $\E[|\wt a_2|]<1$
\begin{align*}
\E[T] &= \E\left[\sum_{k=2}^{\infty} \wt a_{2} \wt a_{3} \ldots \wt a_{k-1} e_k\right] = \sum_{k=2}^{\infty}\E\left[ \wt a_{2} \wt a_{3} \ldots \wt a_{k-1} e_k\right],\\
&= \sum_{k=2}^{\infty}\E\left[ \wt a_{2}\right]\E\left[ \wt a_{3}\right] \ldots \E\left[\wt a_{k-1}\right]\E\left[ e_k\right] = \frac{c+d}{2-(a+b)}.
\end{align*}
Then, we can calculate $\E[T^2]$ as follows
\begin{align*}
\E[T^2] &= \E\left[\left(\sum_{k=2}^{\infty} \wt a_{2} \wt a_{3} \ldots \wt a_{k-1} e_k\right)^2\right],\\
&= \sum_{k=2}^\infty \E[\wt a_{2}^2 \wt a_{3}^2 \ldots \wt a_{k-1}^2 e_k^2] + 2\sum_{k=2}^\infty\sum_{l=k+1}^\infty\E[\wt a_{2}^2 \wt a_{3}^2 \ldots \wt a_{k-1}^2 \wt a_k e_k \wt a_{k+1} \ldots \wt a_{l-1}  e_l],\\
&= \sum_{k=2}^\infty \left(\frac{\sigma_a^2+\sigma_b^2+a^2+b^2}2\right)^{k-2}\frac{\sigma_c^2+\sigma_d^2+c^2+d^2}2\\
	&\hspace{30pt} +2\sum_{k=2}^\infty\sum_{l=k+1}^\infty \left(\frac{\sigma_a^2+\sigma_b^2+a^2+b^2}2\right)^{k-2}\frac{ac+bd}2\left(\frac{a+b}2\right)^{l-k-2}\frac{c+d}2,\\
&= \frac{\sigma_c^2+\sigma_d^2+c^2+d^2}{2-(\sigma_a^2+\sigma_b^2+a^2+b^2)} + \frac{2(ac+bd)(c+d)}{(2-(\sigma_a^2+\sigma_b^2+a^2+b^2))(2-(a+b))}.
\end{align*}
This allows us to say that
\begin{multline*}
\Var(T) = \frac{\sigma_c^2+\sigma_d^2}{2-\left(\sigma_a^2 + \sigma_b^2+a^2 +b^2\right)} + \left(\frac{c+d}{2-(a+b)}\right)^2\frac{\sigma_a^2+\sigma_b^2}{2-\left(\sigma_a^2 + \sigma_b^2+a^2 +b^2\right)}\\
	+\frac2{2-\left(\sigma_a^2 + \sigma_b^2+a^2 +b^2\right)} \frac{(ad-bc+c-d)^2}{(2-(a+b))^2}.
\end{multline*}
Under hypothesis \H{H1} and \H{H2} we immediately have that the first term is positive and that the two other terms are non-negative, allowing us to say that $T$ is not degenerate.


\section{Proof of Lemma \ref{LFGN}} \label{demoLFGN}


We shall now prove that for all $f\in\calC_{b}^1(\R_+)$,
\begin{equation*} \label{limLFGN}
\lim_{n\to\infty} \frac1{|\T_n|} \sum_{k\in\T_n} f(X_k) = \E[f(T)].
\end{equation*}
Denote $g=f-\E[f(T)]$,
$$\begin{array}{ccccc}
\displaystyle{\overline M_{\T_n} (f) = \frac1{|\T_n|} \sum_{k\in \T_n} f(X_k)} & & \text{and} & & \displaystyle{\overline M _{\G_n} (f) = \frac1{|\G_n|} \sum_{k\in \G_n} f(X_k)}.
\end{array}$$
Via Lemma A.2 of \cite{BercuBDSAGP}, it is only necessary to prove that
$$\lim_{n\to\infty} \frac1{|\G_n|} \sum_{k\in\G_n} g(X_k) = 0 \hspace{20pt} \text{a.s.}$$
We shall follow the induced Markov chain approach, originally proposed by Guyon in \cite{Guyon}. Let $Q$ be the transition probability of $(Y_n)$, $Q^p$ the $p$-th iterated of $Q$. In addition, denote by $\nu$ the distribution of $Y_1=X_1$ and $\nu Q^p$ the law of $Y_p$. Finally, let $P$ be the transition probability of $(X_n)$ as defined in \cite{Guyon}. We obtain from relation (7) of \cite{Guyon} that for all $n\geq0$
$$\E[\overline M _{\G_n} (g)^2] = \frac1{2^n} \nu Q^n g^2 + \sum_{k=0}^{n-1} \frac1{2^{k+1}} \nu Q^k P(Q^{n-k-1}g \star Q^{n-k-1}g)$$
where, for all $x,y\in\N$, $(f\star g)(x,y) = f(x)g(y)$. Consequently,
\begin{align}
\sum_{n=0}^\infty \E[\overline M _{\G_n} (g)^2] &= \sum_{n=0}^\infty  \frac1{2^n} \nu Q^n g^2 + \sum_{n=1}^\infty \sum_{k=0}^{n-1} \frac1{2^{k+1}} \nu Q^k P(Q^{n-k-1}g \star Q^{n-k-1}g),\nonumber\\
	&\leq \sum_{k=0}^\infty \frac1{2^k} \nu Q^k\left(g^2+P\left(\sum_{l=0}^\infty |Q^l g \star Q^l g|\right)\right). \label{etoile}
\end{align}
However, for all $x\in \N$,
$$Q^n g(x) = Q^n f(x) - \E[f(T)] = \E_x[f(Y_n) - f(T)] = \E_x[f(Z_n)-f(T)]$$
where $Z_n$ is given by \eqref{defZn}. Hence, we deduce from the mean value theorem and the Cauchy-Schwarz inequality that
\begin{equation}
|Q^n g(x)| \leq \E_x[W_n|Z_n-T|] \leq \E_x[W_n^2]^{1/2}\E_x[(Z_n-T)^2]^{1/2} \label{majQn}
\end{equation}
where
$W_n = \sup_{z\in[Z_n,T]}|f'(z)|$. By the very definition of $\calC_{b}^1(\R_+)$, one can find some constant $\gamma>0$ such that $|f'(z)|\leq \gamma$. Hence, 
\begin{equation}\label{espW}
\E_x[W_n^2]^{1/2} \leq \gamma.
\end{equation}
Furthermore 
$$\displaystyle Z_n- T= \wt a_2 \hdots \wt a_n Y_1 - \sum_{k=n}^\infty \wt a_2 \hdots \wt a_k e_{k+1}$$
and the triangle inequality allows us to say that 
\begin{align}
\E_x[(Z_n-T)^2]^{1/2} &\leq \E_x[(\wt a_2 \hdots \wt a_n Y_1)^2]^{1/2} + \sum_{k=n}^\infty \E_x[(\wt a_2 \hdots \wt a_k e_{k+1})^2]^{1/2}\nonumber\\
	&\leq \E[\wt a_2^2]^{(n-1)/2} \E_x[Y_1^2]^{1/2} + \sum_{k=n}^\infty \E_x[\wt a_2^2]^{(k-1)/2} \E[e_{k+1}^2]^{1/2}\nonumber\\
	&\leq \sqrt{\E[\wt a_2^2]}^{n-1}\(|x|+\frac{\E[e_2^2]^{1/2}}{1-\E[\wt a_2^2]^{1/2}}\)\nonumber\\
	&\leq \alpha \sqrt{\E[\wt a_2^2]}^{n} (1+ |x|) \label{majZ}
\end{align}
where 
$$\alpha = \max\(1,\frac{\E[e_2^2]^{1/2}}{1-\E[\wt a_2^2]^{1/2}}\).$$
\noindent Finally, we obtain from \eqref{majQn} together with \eqref{espW} and \eqref{majZ} that
$$ |Q^n g(x)| \leq \gamma \alpha \sqrt{\E[\wt a_2^2]}^{n-1} (1+ |x|).$$
Therefore,
\begin{equation} \label{doubleetoile}
P\left(\sum_{n=0}^\infty|Q^n g\star Q^n g|\right) \leq \frac{\gamma^2 \alpha^2}{1-\E[\wt a_2^2]}P(h\star h)
\end{equation}
where, for all $x\in\N$, $h(x)=1+|x|$. We are now in position to prove that
\begin{equation}\label{but7}
\E\left[\sum_{n=0}^\infty \overline M _{\G_n} (g)^2\right] < \infty.
\end{equation}
Let $G$ be be the random vector defined by $G(x) = (a_1x+\veps_2,b_1x+\veps_3)^t$. We can easily see from \H{H2} that it exists some constant $\beta>0$ such that
$$P(h\star h)(x) = \E[(h\star h)(G(x))] \leq \beta (1+x^2).$$
Consequently, since, for all $z\in\R$, $|g(z)|\leq2\gamma$, we obtain from \eqref{etoile} together with \eqref{doubleetoile} that
\begin{align}
\sum_{n=0}^\infty \E[\overline M_{\G_n}(g)^2] &\leq \sum_{k=0}^\infty \frac1{2^k}\(\E[g^2(Y_k)] + \frac{\beta\gamma^2 \alpha^2}{1-\E[\wt a_2^2]}(1+\E[Y_k^2]) \),\nonumber\\
	&\leq \(8\gamma^2 + \frac{\beta\gamma^2 \alpha^2}{1-\E[\wt a_2^2]}\)\(1+\sum_{k=0}^\infty \frac1{2^k} \E[Y_k^2]\).\label{sumE}
\end{align}
In addition, we also have
\begin{align}
\E[Y_k^2]^{1/2} &= \E[Z_k^2]^{1/2},\nonumber\\
	&\leq \E_x[(\wt a_2 \hdots \wt a_n Y_1)^2]^{1/2} + \sum_{k=2}^n \E_x[(\wt a_2 \hdots \wt a_{k-1} e_{k})^2]^{1/2},\nonumber\\
	&\leq \E[\wt a_2^2]^{(n-1)/2} \E_x[Y_1^2]^{1/2} + \sum_{k=2}^\infty \E_x[\wt a_2^2]^{(k-2)/2} \E[e_{k+1}^2]^{1/2},\nonumber\\
	&\leq \E[X_1^2]^{1/2} + \frac{\E[e_2^2]^{1/2}}{1-\E[\wt a_2^2]^{1/2}}.\label{espY}
\end{align}
Then, \eqref{sumE} and \eqref{espY} immediately lead to \eqref{but7}. Finally, the monotone convergence theorem implies that
$$\lim_{n\to\infty} \overline M _{\G_n} (g) = 0 \hspace{20pt} \as$$
which completes the proof of Lemma \ref{LFGN}.


\section{Proof of Proposition \ref{cvcrochet}}


The almost sure convergence \eqref{limcrochet} immediately follows from \eqref{defcrochetM} and \eqref{defLk} together with Lemma \ref{LFGN}. It only remains to prove that $\det(L)>0$ where the limiting matrix $L$ can be rewritten as
$L = \E\left[\Gamma \otimes \calC\right]$,
where
$$\begin{array}{ccccc}
 \Gamma = \begin{pmatrix} P(T) & Q(T) \\ Q(T) & R(T) \end{pmatrix}
 & & \text{ and } & &
 \calC = \displaystyle\frac1{(1+T^2)^2} \begin{pmatrix} T^2 & T \\ T & 1 \end{pmatrix}.
\end{array}$$

\noindent We have
\begin{align}
 L &= \E\left[\begin{pmatrix} \sigma_a^2 T^2 & \rho_{ab} T^2 \\ \rho_{ab} T^2 & \sigma_b^2 T^2 \end{pmatrix} \otimes \calC\right] + \E\left[\begin{pmatrix} \sigma_c^2 & \rho_{cd} \\ \rho_{cd} & \sigma_d^2 \end{pmatrix} \otimes \calC\right],\nonumber\\
	&= \begin{pmatrix} \sigma_a^2 & \rho_{ab} \\ \rho_{ab} & \sigma_b^2 \end{pmatrix} \otimes \E[T^2\calC] + \begin{pmatrix} \sigma_c^2 & \rho_{cd} \\ \rho_{cd} & \sigma_d^2 \end{pmatrix} \otimes \E[\calC]. \label{decompoL}
\end{align}

\noindent We shall prove that $\E[\calC]$ is a positive definite matrix and that $\E[T^2\calC]$ is a positive semidefinite matrix. Denote by $\lambda_1$ and $\lambda_2$ the two eigenvalues of the real symmetric matrix $\E[\calC]$. We clearly have
$$\lambda_1 + \lambda_2 = tr(\E[\calC]) = \E\left[\frac{T^2+1}{(1+T^2)^2}\right] > 0$$
and
$$\lambda_1 \lambda_2 = \det(\E[\calC]) = \E\left[\frac{T^2}{(1+T^2)^2}\right] \E\left[\frac{1}{(1+T^2)^2}\right] - \E\left[\frac{T}{(1+T^2)^2}\right]^2 \geq0$$
thanks to the Cauchy-Schwarz inequality and $\lambda_1 \lambda_2 = 0$ if and only if $T$ is degenerate, which is not the case thanks to 
Lemma \ref{CVYn}. Consequently, $\E[\calC]$ is a positive definite matrix. In the same way, we can prove that $\E[T^{2}\calC]$ is a positive semidefinite matrix. Since the Kronecker product of two positive semidefinite (respectively positive definite) matrices is a positive semidefinite (respectively positive definite) matrix, we deduce from \eqref{decompoL} that $L$ is positive definite as soon as $\rho_{cd}^2 < \sigma_c^2 \sigma_d^2$ and $\rho_{ab}^2 \leq \sigma_a^2 \sigma_b^2$ which is the case thanks to \H{H3}.


\section{Proof of Theorem \ref{CVpstheta}} \label{demoCVpstheta}


We will follow the same approach as in Bercu et al.~\cite{BercuBDSAGP}. For all $n\geq1$, let $\calV_n = M_n^t \Sigma_{n-1}^{-1} M_n = (\wh\theta_n - \theta)^t \Sigma_{n-1} (\wh \theta_n - \theta)$.
First of all, we have
\begin{align*}
 \calV_{n+1} &= M_{n+1}^t \Sigma_n^{-1} M_{n+1} = (M_n + \Delta M_{n+1})^t \Sigma_n^{-1} (M_n + \Delta M_{n+1}),\\
	      &= M_n^t \Sigma_n^{-1} M_n + 2 M_n^t \Sigma_n^{-1} \Delta M_{n+1} + \Delta M_{n+1}^t \Sigma_n^{-1} \Delta M_{n+1},\\
	      &= \calV_n - M_n^t(\Sigma_{n-1}^{-1} - \Sigma_n^{-1})M_n + 2 M_n^t \Sigma_n^{-1} \Delta M_{n+1} + \Delta M_{n+1}^t \Sigma_n^{-1} \Delta M_{n+1}.
\end{align*}

\noindent By summing over this identity, we obtain the main decomposition
\begin{equation}\label{egaliteVn}
 \calV_{n+1} + \calA_n = \calV_1 + \calB_{n+1} + \calW_{n+1}
\end{equation}
where
$$\calA_n = \sum_{k=1}^n M_k^t(\Sigma_{k-1}^{-1} - \Sigma_k^{-1})M_k,$$
$$\calB_{n+1} = 2 \sum_{k=1}^n M_k^t \Sigma_k^{-1} \Delta M_{k+1} \hspace{10pt} \text{ and } \hspace{10pt} \calW_{n+1} = \sum_{k=1}^n \Delta M_{k+1}^t \Sigma_k^{-1} \Delta M_{k+1}.$$

\begin{Lem} \label{lemCVVnAn}
 Assume that \H{H1} to \H{H3} are satisfied. Then, we have
\begin{equation} \label{CVWn}
 \lim_{n\to\infty} \frac{\calW_{n}}{n} = \frac12 tr((I_2 \otimes C)^{-1/2} L (I_2 \otimes C)^{-1/2}) \hspace{20pt} \text{ a.s.}
\end{equation}
where $C$ is the positive definite matrix given by \eqref{defA}. In addition, we also have
\begin{equation}\label{CVBn}
\calB_{n+1} = o(n) \hspace{20pt} \text{ a.s.}
\end{equation}
and
\begin{equation} \label{CVVnAn}
 \lim_{n\to\infty} \frac{\calV_{n+1} + \calA_n}{n} = \frac12 tr((I_2 \otimes C)^{-1/2} L (I_2 \otimes C)^{-1/2}) \hspace{20pt} \text{ a.s.}
\end{equation}
\end{Lem}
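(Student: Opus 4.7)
The plan is to exploit the master decomposition \eqref{egaliteVn}: once \eqref{CVWn} and \eqref{CVBn} are in hand, \eqref{CVVnAn} is immediate since $\calV_1$ is a fixed bounded quantity. The preliminary ingredient needed throughout is the almost sure limit $\Sigma_n/|\T_n| \to I_2 \otimes C$. This follows by applying Lemma \ref{LFGN} to the three bounded $\calC^1$ functions $x\mapsto x^2/(1+x^2)$, $x\mapsto x/(1+x^2)$ and $x\mapsto 1/(1+x^2)$. The same induced Markov chain argument carried out in Section \ref{demoLFGN} also delivers the generation-wise version $L_k/|\G_k|\to L$ a.s., and combined with $|\G_k|/|\T_k|\to 1/2$ this will be the key input for the convergence of the predictable bracket of $(M_n)$.

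For \eqref{CVWn}, my plan is to write $\Delta M_{k+1}^t \Sigma_k^{-1}\Delta M_{k+1} = tr(\Sigma_k^{-1}\Delta M_{k+1}\Delta M_{k+1}^t)$ and split
$$\calW_{n+1} \;=\; \sum_{k=1}^n tr\bigl(\Sigma_k^{-1} L_k\bigr) + R_{n+1},$$
where $R_{n+1}$ is a scalar martingale since $\E[\Delta M_{k+1}\Delta M_{k+1}^t|\calF_k] = L_k$ by \eqref{defcrochetM} and \eqref{defLk}. The predictable summands, normalised by $n$, converge by the two preliminary limits to $\tfrac12\,tr((I_2\otimes C)^{-1}L)$, which coincides with the announced limit by cyclicity of the trace. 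For the remainder $R_{n+1}$, hypothesis \H{H4} supplies a uniform bound on the conditional fourth moments of the entries of $\Delta M_{k+1}$, so that $\langle R\rangle_n = \calO(n)$, and the scalar martingale strong law from \cite{Duflo} yields $R_{n+1}=o(n)$ almost surely.

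The hard part will be the cross term $\calB_{n+1}$. It is a scalar martingale with increments $2 M_k^t \Sigma_k^{-1} \Delta M_{k+1}$ and conditional variance $4 M_k^t \Sigma_k^{-1} L_k \Sigma_k^{-1} M_k$. Since the smallest eigenvalue of $|\T_k|^{-1}\Sigma_k$ stays bounded away from zero while $L_k/|\G_k|$ is bounded, this conditional variance is of order $\|M_k\|^2/|\T_k| = \calO(\calV_k\,|\T_{k-1}|/|\T_k|) = \calO(\calV_k)$, so $\langle \calB\rangle_n = \calO\bigl(\sum_{k\le n}\calV_k\bigr)$; the unpleasant feature is that $\calV_k$ also appears on the right-hand side of \eqref{egaliteVn}. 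I plan to close this loop along the lines of \cite{BercuBDSAGP}: on the event $\{\langle\calB\rangle_\infty=\infty\}$ apply the strong law in the form $\calB_n = o(\langle\calB\rangle_n^{1/2}\log\langle\calB\rangle_n)$, plug this back into \eqref{egaliteVn} together with the already established \eqref{CVWn}, and then deduce by an elementary comparison lemma that $\calV_n = \calO(n)$, whence $\langle\calB\rangle_n = \calO(n^2)$ and finally $\calB_n = o(n)$ a.s. On the complementary event $\{\langle\calB\rangle_\infty<\infty\}$, $\calB_n$ converges and the conclusion is trivial.

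Finally, \eqref{CVVnAn} follows by dividing \eqref{egaliteVn} by $n$: the constant $\calV_1$ contributes $0$ in the limit, $\calB_{n+1}/n\to 0$ by \eqref{CVBn}, and $\calW_{n+1}/n$ tends to $\tfrac12\,tr((I_2\otimes C)^{-1/2}L(I_2\otimes C)^{-1/2})$ by \eqref{CVWn}. The adaptation of the BAR argument of \cite{BercuBDSAGP} to the random coefficient setting is then mostly bookkeeping, once the conditional moments of $\Delta M_{k+1}$ have been identified via \H{H2}--\H{H4}.
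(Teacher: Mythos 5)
Your treatment of \eqref{CVWn} is sound and essentially the paper's argument in different bookkeeping: you compensate the quadratic form by $tr(\Sigma_k^{-1}L_k)$ directly, whereas the paper first replaces $|\T_k|\Sigma_k^{-1}$ by $(I_2\otimes C)^{-1}$ and then splits $\calH_{n+1}$ into $\sum_k L_k/|\T_k|$ plus a martingale $\calK_{n+1}$; both routes rest on the same inputs ($L_k/|\T_k|\to L/2$, $\Sigma_k/|\T_k|\to I_2\otimes C$, a martingale remainder with bracket $\calO(n)$ controlled by fourth moments, and Ces\`aro). The same is true of your derivation of \eqref{CVVnAn} from the decomposition, once \eqref{CVWn} and \eqref{CVBn} are available.

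The genuine gap is in \eqref{CVBn}. You bound the conditional variance of $\Delta\calB_{k+1}$ by $\calO(\calV_k)$, hence $\langle\calB\rangle_n=\calO\bigl(\sum_{k\le n}\calV_k\bigr)$, and then try to close the loop with the strong law $\calB_n=o\bigl(\langle\calB\rangle_n^{1/2}\log\langle\calB\rangle_n\bigr)$ plus a comparison lemma. This cannot deliver $\calB_{n+1}=o(n)$. First, the comparison recursion $\calV_{n+1}\le Cn+o\bigl(S_{n+1}^{1/2}\log S_{n+1}\bigr)$ with $S_n=\sum_{k\le n}\calV_k$ only yields $S_n=\calO\bigl(n^2(\log n)^{c}\bigr)$, i.e.\ $\calV_n=\calO\bigl(n(\log n)^{c}\bigr)$, not $\calV_n=\calO(n)$. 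Second, and more tellingly, even granting your intermediate claim $\calV_n=\calO(n)$, you would only get $\langle\calB\rangle_n=\calO(n^2)$, and the strong law you invoke then gives $\calB_n=o(n\log n)$ (a law-of-iterated-logarithm bound gives $\calO\bigl(n\sqrt{\log\log n}\bigr)$), which is not $o(n)$; so your last step is internally inconsistent, and without $\calB_{n+1}/n\to0$ you obtain neither \eqref{CVBn} nor the exact limit \eqref{CVVnAn}. The missing idea is the paper's self-referential bound on the bracket: one shows $L_n\le\alpha\,\Psi_n\Psi_n^t$ with $\alpha=\max(\sigma_a^2,\sigma_c^2)+\max(\sigma_b^2,\sigma_d^2)+\max(|\rho_{ab}|,|\rho_{cd}|)$ (positive definiteness of the matrix $\Delta_n$ built from $P$, $Q$, $R$), and combines it with the Riccati-type inequality $\Sigma_n^{-1}\Psi_n\Psi_n^t\Sigma_n^{-1}\le\Sigma_{n-1}^{-1}-\Sigma_n^{-1}$ (Lemma B.1 of \cite{BercuBDSAGP}) to get
\begin{equation*}
\langle\calB\rangle_n\le 4\alpha\,\calA_n .
\end{equation*}
Since $\calA_n$ appears with a plus sign on the left of \eqref{egaliteVn} and $\calV_{n+1}\ge0$, the strong law $\calB_n=o(\calA_n)$ plugged into \eqref{egaliteVn} forces $\calA_n=\calO(n)$ and $\calV_n=\calO(n)$, whence $\calB_n=o(n)$ exactly. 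Bounding $\langle\calB\rangle_n$ by $\sum_k\calV_k$ instead of by $\calA_n$ destroys precisely this structure, and no generic martingale strong law recovers it.
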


\begin{proof}

\noindent First of all, we have $\calW_{n+1} = \calT_{n+1} + \calR_{n+1}$ where
$$\calT_{n+1} = \sum_{k=1}^n \frac{\Delta M_{k+1}^t (I_2 \otimes C)^{-1} \Delta M_{k+1}}{|\T_k|},$$
$$\calR_{n+1} = \sum_{k=1}^n \frac{\Delta M_{k+1}^t (|\T_k| \Sigma_k^{-1} - (I_2 \otimes C)^{-1}) \Delta M_{k+1}}{|\T_k|}.$$

\noindent One can observe that $\calT_{n+1} = tr((I_2 \otimes C)^{-1/2} \calH_{n+1} (I_2 \otimes C)^{-1/2})$ where
$$ \calH_{n+1} = \sum_{k=1}^n \frac{\Delta M_{k+1} \Delta M_{k+1}^t}{|\T_k|}.$$

\noindent Our aim is to make use of the strong law of large numbers for martingale transforms, so we start by adding and subtracting a term involving 
the conditional expectation of $\Delta \calH _{n+1}$ given $\calF_n$. We have thanks to relation \eqref{defLk} that for all 
$n\geq0$, $\E[\Delta M_{n+1} \Delta M_{n+1}^t | \calF_n] = L_n$. Consequently, we can split $\calH_{n+1}$ into two terms
$$\calH_{n+1} = \sum_{k=1}^n \frac{L_k}{|\T_k|} + \calK_{n+1},\hspace{20pt} \text{where} \hspace{20pt} \calK_{n+1} = \sum_{k=1}^n \frac{\Delta M_{k+1} \Delta M_{k+1}^t - L_k}{|\T_k|}.$$
It clearly follows from convergence \eqref{limcrochet} that 
$$ \lim_{n\to\infty} \frac{L_n}{|\T_n|} = \frac12 L \hspace{20pt} \text{ a.s.}$$
Hence, Cesaro convergence immediately implies that
\begin{equation} \label{CesaroLk}
 \lim_{n\to\infty} \frac1n \sum_{k=1}^n \frac{L_k}{|\T_k|} = \frac12 L \hspace{20pt} \text{ a.s.}
\end{equation}
On the other hand, the sequence $(\calK_n)_{n\geq2}$ is obviously a square integrable martingale. Moreover, we have 
$$\Delta \calK_{n+1}= \calK_{n+1} - \calK_n = \frac1{|\T_n|} (\Delta M_{n+1} \Delta M_{n+1}^t - L_n).$$
For all $u\in\R^4$, denote $\calK_n(u) = u^t\calK_n u$. It follows from tedious but straightforward calculations, together with Lemma 
\ref{LFGN}, that the increasing process of the martingale $(\calK_n(u))_{n\geq2}$ satisfies $\langle \calK(u)\rangle_n =\calO(n)$ a.s. Therefore, we deduce from 
the strong law of large numbers for martingales that for all $u\in\R^4$, $\calK_n(u) = o(n)$ a.s.~leading to $\calK_n = o(n)$ a.s. Hence, 
we infer from \eqref{CesaroLk} that
\begin{equation} \label{CVHn}
 \lim_{n\to\infty} \frac{\calH_{n+1}}n = \frac12 L \hspace{20pt} \text{a.s.}
\end{equation}

\noindent Via the same arguments as in the proof of convergence \eqref{limcrochet}, we find that
\begin{equation}\label{CVSigman}
\lim_{n\to\infty} \frac{\Sigma_n}{|\T_n|} = I_2 \otimes C \hspace{20pt} \text{a.s.}
\end{equation}
where $C$ is the positive definite matrix given by \eqref{defA}. Then, we obtain from \eqref{CVHn} that
$$ \lim_{n\to\infty} \frac{\calT_n}n = \frac12 tr((I_2 \otimes C)^{-1/2} L (I_2 \otimes C)^{-1/2}) \hspace{20pt} \text{a.s.}$$
which allows us to say that $\calR_n = o(n)$ a.s.~leading to \eqref{CVWn}. We are now in position to prove \eqref{CVBn}. Let us recall that 
$$\calB_{n+1} = 2 \sum_{k=1}^n M_k^t \Sigma_k^{-1} \Delta M_{k+1} = 2 \sum_{k=1}^n M_k^t \Sigma_k^{-1} \Psi_k \xi_{k+1}.$$

\noindent Hence, $(\calB_n)_{n\geq2}$ is a square integrable martingale. In addition, we have
$$\Delta \calB_{n+1} = 2M_n^t\Sigma_n^{-1}\Delta M_{n+1}.$$

\noindent Thus
\begin{align*}
 \E[(\Delta \calB_{n+1})^2 | \calF_n] &= 4 \E[M_n^t\Sigma_n^{-1}\Delta M_{n+1} \Delta M_{n+1}^t\Sigma_n^{-1}M_n | \calF_n] \hspace{20pt} \text{a.s.} \\
	    &= 4 M_n^t\Sigma_n^{-1}\E[\Delta M_{n+1} \Delta M_{n+1}^t| \calF_n]\Sigma_n^{-1}M_n \hspace{20pt} \text{a.s.}\\
	    &= 4 M_n^t\Sigma_n^{-1} L_n \Sigma_n^{-1}M_n \hspace{20pt} \text{a.s.}
\end{align*}

\noindent We can observe that
\begin{equation*}
 L_n = \sum_{k\in\G_n} \frac1{c_k^2}\begin{pmatrix}
                        P(X_k) & Q(X_k) \\
			Q(X_k) & R(X_k)
                       \end{pmatrix}
			      \otimes \begin{pmatrix}
			               X_k^2 & X_k \\ X_k & 1
			              \end{pmatrix}
\end{equation*}
and
\begin{equation*}
\Psi_n\Psi_n^t = \sum_{k\in\G_n} \frac{1}{c_k} I_2 \otimes \begin{pmatrix}
						X_k^2 & X_k \\ X_k & 1
					      \end{pmatrix}.
\end{equation*}
For $\alpha=\max(\sigma_a^2,\sigma_c^2)+\max(\sigma_b^2,\sigma_d^2)+\max(|\rho_{ab}|,|\rho_{cd}|)$, denote
$$\Delta_n=\begin{pmatrix} \alpha - \displaystyle\frac{P(X_n)}{c_n} & - \displaystyle\frac{Q(X_n)}{c_n} \vspace{5pt}\\ - \displaystyle\frac{Q(X_n)}{c_n} & \alpha - \displaystyle\frac{R(X_n)}{c_n} \end{pmatrix}.$$
We can rewrite $\alpha\Psi_n \Psi_n^t - L_n$ as
$$\alpha\Psi_n \Psi_n^t - L_n= \sum_{k\in \G_n} \frac1{c_k} \Delta_k \otimes \begin{pmatrix} X_k^2 & X_k \\ X_k & 1 \end{pmatrix}.$$
It is not hard to see that $\Delta_n$ is a positive definite matrix. As a matter of fact, we deduce from the elementary inequalities
\begin{equation}\label{majpoly}
\begin{cases}
0<P(X)\leq \max(\sigma_a^2,\sigma_c^2)(1+X^2),\\
0<R(X)\leq \max(\sigma_b^2,\sigma_d^2)(1+X^2),\\
|Q(X)| \leq \max(|\rho_{ab}|,|\rho_{cd}|)(1+X^2),
\end{cases}
\end{equation}
that
$$tr(\Delta_n) = 2\alpha - \frac{P(X_n)}{c_n} -\frac{R(X_n)}{c_n} \geq 2\alpha - \max(\sigma_a^2,\sigma_c^2)-\max(\sigma_b^2,\sigma_d^2) >0.$$
In addition, we also have from \eqref{majpoly} that
\begin{align*}
c_n^2\det(\Delta_n) &= (\alpha c_n - P(X_n))(\alpha c_n - R(X_n)) - Q^2(X_n),\\
	&=\alpha c_n\left(\alpha c_n - P(X_n) - R(X_n)\right) + P(X_n)R(X_n) - Q^2(X_n),\\
	&\geq P(X_k)R(X_k) + \alpha c_n^2 \max(|\rho_{ab}|,|\rho_{cd}|) - Q^2(X_n),\\
	&\geq P(X_k)R(X_k) + \max(|\rho_{ab}|,|\rho_{cd}|)^2 c_n^2 - Q^2(X_n) > 0.
\end{align*}
Consequently, $\Delta_n$ is positive definite which immediately implies that $L_n \leq \alpha \Psi_n\Psi_n^t$. Moreover, we can use Lemma B.1 of \cite{BercuBDSAGP} to say that 
$$\Sigma_{n}^{-1} \Psi_n \Psi_n^t \Sigma_n^{-1} \leq \Sigma_{n-1}^{-1} - \Sigma_n^{-1}.$$

\noindent Hence
\begin{align*}
 \E[(\Delta \calB_{n+1})^2 | \calF_n] &= 4 M_n^t\Sigma_n^{-1} L_n \Sigma_n^{-1}M_n \hspace{20pt} \text{a.s.}\\
			&\leq 4 \alpha M_n^t\Sigma_n^{-1} \Psi_n\Psi_n^t \Sigma_n^{-1}M_n \hspace{20pt} \text{a.s.}\\
			&\leq 4 \alpha M_n^t(\Sigma_{n-1}^{-1} - \Sigma_n^{-1})M_n \hspace{20pt} \text{a.s.}\\
\end{align*}

\noindent leading to $\langle \calB\rangle_n \leq 4\alpha \calA_n$. Therefore it follows from the strong law of large numbers for martingales that $\calB_n = o(\calA_n)$. Hence, we deduce from decomposition \eqref{egaliteVn} that 
$$\calV_{n+1} + \calA_n = o(\calA_n) + \calO(n) \hspace{20pt} \text{ a.s.}$$

\noindent leading to $\calV_{n+1} = \calO(n)$ and $\calA_n = \calO(n)$ a.s.~which implies that $\calB_n= o(n)$ a.s. Finally we clearly obtain convergence \eqref{CVVnAn} from the main decomposition \eqref{egaliteVn} together with \eqref{CVWn} and \ref{CVBn}, which completes the proof of Lemma \ref{lemCVVnAn}.
\end{proof}

\begin{Lem}\label{delta}
 Assume that \H{H1} to 
\H{H5} are satisfied. For all $\delta > 1/2$, we have
\begin{equation*}\label{majdelta}\|M_n\|^2 = o(|\T_n|n^\delta) \hspace{20pt} \text{ a.s.}\end{equation*}
\end{Lem}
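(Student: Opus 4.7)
The strategy is to upgrade the deterministic bound $\langle M\rangle_n = \calO(|\T_{n-1}|)$ that follows from Proposition \ref{cvcrochet} to an $L^{2p}$ moment bound on $M_n$ itself, exploiting the extra $\alpha$-moment hypothesis \H{H5}. Setting $p = \alpha/2 > 2$, I aim to prove
$$\E\bigl[\|M_n\|^{2p}\bigr] \leq C |\T_{n-1}|^p,$$
and then conclude by Markov's inequality and Borel--Cantelli: the fact that $\delta > 1/2$ together with $p > 2$ yields $\delta p > 1$, hence the probabilities $\Pro(\|M_n\|^2 > \epsilon |\T_{n-1}| n^\delta) \leq C/n^{\delta p}$ are summable, giving $\|M_n\|^2 = o(|\T_{n-1}| n^\delta) = o(|\T_n| n^\delta)$ a.s.

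\textbf{Conditional moment bound on $\Delta M_k$.} Each coordinate of $\Delta M_k = \Psi_{k-1}\xi_k$ is a sum over $i\in\G_{k-1}$ of four types of terms, all of the shape $X_i V_{2i}/c_i$, $V_{2i}/c_i$, $X_i V_{2i+1}/c_i$, or $V_{2i+1}/c_i$. Using $V_{2i} = (a_i-a)X_i + (\veps_{2i}-c)$ together with \H{H5}, one gets $\E[|V_{2i}|^\alpha|\calF_{k-1}]\leq C\,c_i^{\alpha/2}$ a.s., and similarly for $V_{2i+1}$. Combined with the trivial pointwise bounds $X_i^2/c_i\leq 1$ and $1/c_i\leq 1$, this shows that each individual summand has conditional $\alpha$-th moment bounded by a constant independent of $i$ and $k$. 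Now by \H{H3} the summands indexed by distinct $i\in\G_{k-1}$ are conditionally independent given $\calF_{k-1}$, and they are conditionally centered by \eqref{espcond}, so Rosenthal's inequality applied coordinate by coordinate yields
$$\E\bigl[\|\Delta M_k\|^{2p}\,\big|\,\calF_{k-1}\bigr] \leq C_p\, |\G_{k-1}|^p \quad \text{a.s.}$$

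\textbf{Moment bound on $M_n$.} From \eqref{defLk} together with the elementary bounds \eqref{majpoly} on $P,Q,R$, one has $tr(L_k) \leq C|\G_k|$ deterministically, so $tr(\langle M\rangle_n) \leq C|\T_{n-1}|$ a.s. The Burkholder--Rosenthal inequality for the $\R^4$-valued martingale $(M_n)$ gives
$$\E\bigl[\|M_n\|^{2p}\bigr] \leq C_p\, \E\bigl[tr(\langle M\rangle_n)^p\bigr] + C_p \sum_{k=1}^n \E\bigl[\|\Delta M_k\|^{2p}\bigr] \leq C\, |\T_{n-1}|^p,$$
where the geometric growth $|\G_k| = 2^k$ absorbs the sum into the dominant term $|\G_{n-1}|^p \leq |\T_{n-1}|^p$. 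Markov and Borel--Cantelli, as sketched above, then deliver the claim.

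\textbf{Main obstacle.} The delicate point is the conditional Rosenthal estimate for $\Delta M_k$: both the sharp scaling in $|\G_{k-1}|^p$ and, more importantly, the uniformity of the per-summand moment bounds in $i$ and $k$ are essential. This uniformity is precisely what the weighting $c_i = 1 + X_i^2$ in the definition of the WLS estimator buys us, since without it the summands would carry factors of $X_i$ that a priori have no uniform control along the tree; the conditional independence across distinct parents provided by \H{H3} is equally critical, as it is the only mechanism through which the $|\G_{k-1}|^p$ (rather than $|\G_{k-1}|^{2p}$) scaling can emerge.
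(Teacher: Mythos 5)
Your argument is essentially correct, but it follows a genuinely different route from the paper. The paper treats each of the four scalar components of $M_n$ separately, rewrites it as $\sum_{k=1}^n\sqrt{|\G_{k-1}|}\,f_k$ where $f_k$ is the normalized sum over the generation $\G_{k-1}$, checks that $\sup_n\E[f_{n+1}^4|\calF_n]<\infty$ a.s.\ (using the conditional fourth-moment bound $\E[V_{2k}^4|\calF_n]\leq\mu_{ac}^4c_k^2$, i.e.\ only \H{H4}, together with the conditional independence across parents to kill the cross terms), and then invokes Wei's Lemma, an almost-sure result for scalar martingale transforms, to get directly $P_n^2=o(|\T_{n-1}|n^\delta)$ a.s. You instead go through unconditional moments: a conditional Rosenthal bound $\E[\|\Delta M_k\|^{2p}|\calF_{k-1}]\leq C|\G_{k-1}|^p$ with $p=\alpha/2>2$, Burkholder--Rosenthal for $\E[\|M_n\|^{2p}]\leq C|\T_{n-1}|^p$, then Markov and Borel--Cantelli using $\delta p>1$. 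Both mechanisms are sound, and your accounting (the deterministic bound $tr(\langle M\rangle_n)=\calO(|\T_{n-1}|)$, the geometric absorption of $\sum_k|\G_{k-1}|^p$, the summability in $n$) checks out. Two remarks on what each approach buys. First, the paper's route only needs the fourth-moment hypothesis and tolerates conditional bounds that are merely a.s.\ finite, since everything stays at the level of conditional expectations and a.s.\ limit theorems; your route genuinely uses \H{H5} and, more delicately, needs the constant $C$ in $\E[|V_{2i}|^\alpha|\calF_{k-1}]\leq C c_i^{\alpha/2}$ to be deterministic, whereas \H{H5} as stated only provides an a.s.\ finite random bound. This is repaired by the standing model assumptions of Section \ref{section2} (the sequences $(a_n,b_n)$ and $(\veps_{2n},\veps_{2n+1})$ are i.i.d., mutually independent and independent of the past of the tree), under which the conditional $\alpha$-moments are constants; you should say this explicitly, since without it the unconditional bound $\E[\|M_n\|^{2p}]\leq C|\T_{n-1}|^p$, and hence the Borel--Cantelli step, would not follow. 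Second, the conditional independence you need is that of the pairs $((a_i,\veps_{2i}),(a_j,\veps_{2j}))$ across distinct $i,j\in\G_{k-1}$, which again comes from the i.i.d.\ and mutual-independence assumptions rather than from \H{H3} alone; with that attribution corrected, your proof is a valid, self-contained alternative that avoids the external appeal to Wei's Lemma at the cost of stronger moment requirements.
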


\begin{proof}
Let us recall that
$$M_n = \sum_{k\in\T_{n-1}} \frac1{c_k} \begin{pmatrix} X_kV_{2k} \\ V_{2k} \\ X_kV_{2k+1} \\V_{2k+1}\end{pmatrix}.$$
Denote
$$\begin{array}{ccccc}P_n = \displaystyle \sum_{k\in\T_{n-1}} \frac{X_kV_{2k}}{c_k}  & & \text{ and } & & \displaystyle Q_n = \sum_{i\in\T_{n-1}} \frac{V_{2k}}{c_k}.\end{array}$$
On the one hand, $P_n$ can be rewritten as
$$\begin{array}{ccccc}\displaystyle{ P_n = \sum_{k=1}^n \sqrt{|\G_{k-1}|} f_k } & & \text{ where } & & \displaystyle f_n = \frac1{\sqrt{|\G_{n-1}|}}\sum_{k\in\G_{n-1}} \frac{X_kV_{2k}}{c_k}.\end{array}$$
We already saw in Section 3 that for all $k\in\G_n$,
$$\begin{array}{ccccccc} \E[V_{2k}|\calF_n] = 0 & & \text{ and } & & \E[V_{2k}^2|\calF_n] = \sigma_a^2X_k^2+\sigma_c^2 = P(X_k) & & \text{a.s.} \end{array}$$
In addition, for all $k\in\G_n$,
$$\E[V_{2k}^4|\calF_n] = \mu_a^4X_k^4 + 6\sigma_a^2\sigma_c^2 X_k^2 + \mu_c^4 \hspace{20pt} \text{a.s.}$$
which implies that
\begin{equation}\label{majV2k4} \E[V_{2k}^4|\calF_n] \leq \mu_{ac}^4 c_k^2 \hspace{20pt} \text{a.s.}.\end{equation}
where $\mu_{ac}^4 = \max(\mu_a^4,3\sigma_a^2\sigma_c^2,\mu_c^4)$. Consequently, $\E[f_{n+1}|\calF_n]=0$ a.s.~and we deduce from \eqref{majV2k4} together with the Cauchy-Schwarz inequality that
\begin{align}
\E[f_{n+1}^4|\calF_n] &= \frac1{|\G_n|} \E\left[\left. \(\sum_{k\in\G_n} \frac{X_kV_{2k}}{c_k} \)^4 \right|\calF_n\right],\nonumber\\
	&= \frac1{|\G_n|^2} \sum_{k\in\G_n}\(\frac{X_k}{\sqrt{c_k}}\)^4\frac{\E[V_{2k}^4|\calF_n]}{c_k^2}\nonumber\\
		&\hspace{80pt}+ \frac3{|\G_n|^2} \sum_{k\in\G_n}\sum_{\substack{l\in\G_n \\ l\neq k}}\(\frac{X_k}{\sqrt{c_k}}\)^2\(\frac{X_l}{\sqrt{c_l}}\)^2\frac{\E[V_{2k}^2|\calF_n]}{c_k} \frac{\E[V_{2l}^2|\calF_n]}{c_l}, \nonumber\\
	&\leq \frac1{|\G_n|^2} \sum_{k\in\G_n} \mu_{ac}^4 + \frac3{|\G_n|^2} \sum_{k\in\G_n}\sum_{\substack{l\in\G_n \\ l\neq k}} \max(\sigma_a^2,\sigma_c^2)^2,\nonumber\\
&\leq \mu_{ac}^4 + 3 \max(\sigma_a^2,\sigma_c^2)^2 \hspace{20pt} \text{a.s.} \label{majfn}
\end{align}
Therefore, we infer from \eqref{majfn} that $\sup_{n\geq0}\E[f_{n+1}^4|\calF_n]<\infty$ a.s.
Hence, we obtain from Wei's Lemma given in \cite{Wei} page 1672 that for all $\delta > 1/2$,
$$P_n^2 = o(|\T_{n-1}|n^\delta) \hspace{20pt} \text{a.s.}$$
On the other hand, $Q_n$ can be rewritten as
$$\begin{array}{ccccc}\displaystyle{ Q_n = \sum_{k=1}^n \sqrt{|\G_{k-1}|} g_k } & & \text{ where } & & \displaystyle g_n = \frac1{\sqrt{|\G_{n-1}|}}\sum_{k\in\G_{n-1}} \frac{V_{2k}}{c_k}.\end{array}$$
Via the same calculation as before, $\E[g_{n+1}|\calF_n] = 0$ a.s.~and, as $c_n\geq1$,
$$\E[g_{n+1}^4|\calF_n]\leq \mu_{bd}^4 + 3 \max(\sigma_b^2,\sigma_d^2)^2 \hspace{20pt} \text{a.s.}$$
Hence, we deduce once again from Wei's Lemma that for all $\delta > 1/2$,
$$Q_n^2 = o(|\T_{n-1}|n^\delta) \hspace{20pt} \text{a.s.}$$
In the same way, we obtain the same result for the two last components of $M_n$, which completes the proof of Lemma \ref{delta}.
\end{proof}

\noindent {\bf Proof of Theorem \ref{CVpstheta}.} We recall from \eqref{difftheta} that $\wh \theta_n-\theta = \Sigma_{n-1}^{-1} M_n$ which implies
$$\|\wh \theta_n - \theta \|^2 \leq \frac{\calV_n}{\lambda_{min}(\Sigma_{n-1})}$$
where $\calV_n = M_n^t \Sigma_{n-1}^{-1} M_n$. On the one hand, it follows from \eqref{CVVnAn} that $\calV_n=\calO(n)$ a.s. On the other hand, we deduce from \eqref{CVSigman} that
$$\lim_{n\to\infty} \frac{\lambda_{min}(\Sigma_n)}{|\T_n|} = \lambda_{min}(C) >0 \hspace{20pt} \text{a.s.}$$
Consequently, we find that
$$\|\wh \theta_n - \theta \|^2 = \calO\left(\frac{n}{|\T_{n-1}|}\right) \hspace{20pt} \text{a.s.}$$

\noindent We are now in position to prove the quadratic strong law \eqref{quadratic1}. First of all a direct application of Lemma \ref{delta} ensures that $\calV_n = o(n^\delta)$ a.s.~for all $\delta > 1/2$. Hence, we obtain from \eqref{CVVnAn} that
\begin{equation}\lim_{n\to\infty} \frac{\calA_n}n = \frac12 tr((I_2 \otimes C)^{-1/2} L (I_2 \otimes C)^{-1/2}) \hspace{20pt} \text{ a.s.}\label{CVAn/n}\end{equation}

\noindent Let us rewrite $\calA_n$ as 
$$\calA_n = \sum_{k=1}^n M_k^t\left(\Sigma_{k-1}^{-1} - \Sigma_k^{-1}\right) M_k = \sum_{k=1}^n M_k^t\Sigma_{k-1}^{-1/2} A_k \Sigma_{k-1}^{-1/2} M_k$$

\noindent where $A_k = I_4 - \Sigma_{k-1}^{1/2}\Sigma_{k}^{-1}\Sigma_{k-1}^{1/2}$. We already saw from \eqref{CVSigman} that 
$$\lim_{n\to\infty} \frac{\Sigma_n}{|\T_n|} = I_2\otimes C \hspace{20pt} \text{a.s.}$$
which ensures that
$$\displaystyle \lim_{n\to\infty} A_n = \frac12 I_4 \hspace{20pt} \text{a.s.}$$
In addition, we deduce from \eqref{CVVnAn} that $\calA_n=\calO(n)$ a.s.~which implies that
\begin{equation}\frac{\calA_n}n = \left(\frac1{2n}\sum_{k=1}^nM_k^t\Sigma_{k-1}^{-1}M_k\right) + o(1) \hspace{20pt} \text{ a.s.}\label{eqAn/n}\end{equation}

\noindent Moreover we have
\begin{align}
 \frac1n\sum_{k=1}^nM_k^t\Sigma_{k-1}^{-1}M_k &= \frac1n \sum_{k=1}^n (\wh \theta_k - \theta)^t\Sigma_{k-1}(\wh \theta_k - \theta), \nonumber\\
	  &= \frac1n \sum_{k=1}^n |\T_{k-1}| (\wh \theta_k - \theta)^t\frac{\Sigma_{k-1}}{|\T_{k-1}|}(\wh \theta_k - \theta), \nonumber\\
	  &= \frac1n \sum_{k=1}^n |\T_{k-1}| (\wh \theta_k - \theta)^t(I_2 \otimes C)(\wh \theta_k - \theta) + o(1) \hspace{20pt} \text{ a.s.} \label{elementQSL}
\end{align}

\noindent Therefore, \eqref{CVAn/n} together with \eqref{eqAn/n} and \eqref{elementQSL} lead to \eqref{quadratic1}.


\section{Proof of Theorem \ref{CVpsvar}} \label{demoCVpsvar}

First of all, we shall only prove \eqref{vitesseeta} since the proof of \eqref{vitesseetad} follows exactly the same lines. We clearly have from \eqref{esteta} that
\begin{align} 
Q_{n-1}(\wh \eta_{n} - \eta_{n}) &= \sum_{k\in\T_{n-1}} \frac1{d_k}(\wh V_{2k}^2 - V_{2k}^2) \psi_k,\nonumber\\
	&=\sum_{l=0}^{n-1}\sum_{k\in\G_{l}} \frac1{d_k}(\wh V_{2k}^2 - V_{2k}^2) \psi_k,\nonumber\\
	&= \sum_{l=0}^{n-1} \sum_{k\in\G_l} \frac1{d_k}\left((\wh V_{2k} - V_{2k})^2 + 2(\wh V_{2k} - V_{2k})V_{2k}\right) \psi_k.\label{diffeta}
\end{align}
In addition, we already saw in Section 3 that for all $l\geq0$ and $k\in\G_l$,
$$\wh V_{2k} - V_{2k} = -\begin{pmatrix} \wh a_l-a \\ \wh c_l - c \end{pmatrix}^t \Phi_k.$$
Consequently,
$$(\wh V_{2k} - V_{2k})^2 \leq \|\Phi_k\|^2 \left((\wh a_l-a )^2 + (\wh c_l-c )^2\right)=c_k \left((\wh a_l-a )^2 + (\wh c_l-c )^2\right).$$
Hence, as $\|\psi_k\|^2 = X_k^4+1 \leq c_k^2$,
\begin{align*}
\left\|\sum_{l=0}^{n-1} \sum_{k\in\G_l} \frac{(\wh V_{2k} - V_{2k})^2}{d_k} \psi_k\right\| &\leq \sum_{l=0}^{n-1} \sum_{k\in\G_l} \frac{c_k\|\psi_k\|}{d_k} \left((\wh a_l-a )^2 + (\wh c_l-c )^2\right),\nonumber\\
	&\leq \sum_{l=0}^{n-1} |\G_l|\left((\wh a_l-a )^2 + (\wh c_l-c )^2\right) .\label{majterme1}
\end{align*}
However, as $\Lambda$ is positive definite, we obtain from \eqref{quadratic1} that 
$$\sum_{l=0}^{n-1} |\G_{l}| \left((\wh a_l-a )^2 + (\wh c_l-c )^2\right) = \calO(n) \hspace{20pt} \text{a.s.}$$
which implies that 
\begin{equation}
\left\|\sum_{l=0}^{n-1} \sum_{k\in\G_l} \frac{(\wh V_{2k} - V_{2k})^2}{d_k} \psi_k\right\| = \calO(n) \hspace{20pt} \text{a.s.} \label{Oterme1}
\end{equation}
Furthermore, denote
$$P_n = \sum_{l=0}^{n-1} \sum_{k\in\G_l} \frac{(\wh V_{2k} - V_{2k})V_{2k}}{d_k} \psi_k.$$
We clearly have
\begin{align*}
\Delta P_{n+1} = P_{n+1} - P_n = \sum_{k\in\G_n} \frac{(\wh V_{2k} - V_{2k})V_{2k}}{d_k} \psi_k,
	=-\sum_{k\in\G_n} \frac{V_{2k}}{d_k} \psi_k \Phi_k^t \begin{pmatrix} \wh a_{n}-a \\ \wh c_{n} - c \end{pmatrix}.
\end{align*}
In addition, for all $k\in\G_n$, $\E[V_{2k}|\calF_n] = 0$ a.s.~and $\E[V_{2k}^2|\calF_n] = \sigma_a^2 X_k^{2} +\sigma_c^2 \leq \alpha c_k$ a.s.~where $\alpha=\max(\sigma_a^2,\sigma_c^2)$. Consequently, $\E[\Delta P_{n+1} |\calF_n] = 0$ a.s.~and
\begin{align*}
\E[\Delta P_{n+1} \Delta P_{n+1}^t |\calF_n] &= \sum_{k\in\G_n} \frac1{d_k^2} \E[V_{2k}^2|\calF_n] \psi_k \Phi_k^t \begin{pmatrix} \wh a_{n} - a \\ \wh c_{n} - c \end{pmatrix}\begin{pmatrix} \wh a_{n} - a \\ \wh c_{n} - c \end{pmatrix}^t \Phi_k \psi_k^t \as\\
	&=\sum_{k\in\G_n} \frac{P(X_k)}{d_k^2} \psi_k \Phi_k^t \begin{pmatrix} \wh a_{n} - a \\ \wh c_{n} - c \end{pmatrix}\begin{pmatrix} \wh a_{n} - a \\ \wh c_{n} - c \end{pmatrix}^t \Phi_k \psi_k^t \as
\end{align*}
Therefore, $(P_n)$ is a square integrable vector martingale with increasing process $\langle P\rangle_n$ given by
\begin{align*}
\langle P\rangle _n &= \sum_{l=1}^{n-1} \E[\Delta P_{l+1} \Delta P_{l+1}^t |\calF_l] \hspace{20pt} \text{a.s.}\\
	&= \sum_{l=1}^{n-1} \sum_{k\in\G_l}\frac{P(X_k)}{d_k^2} \psi_k \Phi_k^t \begin{pmatrix} \wh a_l - a \\ \wh c_l - c \end{pmatrix}\begin{pmatrix} \wh a_l - a \\ \wh c_l - c \end{pmatrix}^t \Phi_k \psi_k^t \as
\end{align*}
It immediately follows from the previous calculation that
\begin{align*}
\|\langle P\rangle _n\| &\leq \alpha \sum_{l=0}^{n-1} \left((\wh a_l-a )^2 + (\wh c_l-c )^2\right)\sum_{k\in\G_l}\frac{c_k \|\psi_k\|^2 \|\Phi_k\|^2}{d_k^2} \hspace{20pt} \text{a.s.}\\
&\leq \alpha \sum_{l=0}^{n-1} |\G_l|\left((\wh a_l-a )^2 + (\wh c_l-c )^2\right) \hspace{20pt} \text{a.s.}
\end{align*}
leading to $\|\langle P\rangle _n\| = \calO(n)$ a.s. Then, we deduce from the strong law of large numbers for martingale given e.g.~in Theorem 1.3.15 of \cite{Duflo} that
\begin{equation}
P_n = o(n) \hspace{20pt} \text{a.s.} \label{oPn}
\end{equation}
Hence, we find from \eqref{diffeta}, \eqref{Oterme1} and \eqref{oPn} that
$\|Q_{n-1}(\wh \eta_{n} - \eta_{n})\| = \calO(n)$ a.s.
Moreover, we infer once again from Lemma \ref{LFGN} that
\begin{equation}
\lim_{n\to\infty} \frac1{|\T_n|}Q_n = D = \E\left[\frac1{(1+T^2)^2}\begin{pmatrix} T^4 & T^2 \\ T^2 & 1 \end{pmatrix}\right] \hspace{20pt} \text{ a.s.} \label{CVQ}
\end{equation}
Moreover, we can prove through tedious calculations that $T^2$ is not degenerate which allows us to say that $D$ is positive definite. This ensures that
$$\|\wh \eta_{n} - \eta_{n}\| = \calO\left(\frac n{|\T_{n-1}|}\right) \hspace{20pt} \text{a.s.}$$

\noindent It remains to establish \eqref{vitesserho}. Denote
$$\begin{array}{ccccc}
\wh W_n = \begin{pmatrix} \wh V_{2n}\\ \wh V_{2n+1} \end{pmatrix} & & \text{ and } & & R_n = \displaystyle\sum_{k\in\T_{n-1}} \frac1{d_k}\left( \wh W_k - W_k \right)^t J W_k \psi_k
\end{array}$$
where $J = \begin{pmatrix} 0 & 1 \\ 1 & 0 \end{pmatrix}$.
Then, we have from \eqref{carre} that
$$Q_{n-1} (\wh \nu_n - \nu_n) = \sum_{k\in\T_{n-1}} \frac1{d_k}\left(\wh V_{2k} - V_{2k} \right)\left(\wh V_{2k+1} - V_{2k+1} \right)\psi_k  + R_n.$$
 It is not hard to see that $(R_n)$ is a square integrable real martingale with increasing process given by
\begin{align*}
\langle R\rangle _n &= \sum_{l=0}^{n-1} \sum_{k\in\G_l} \E\left[\frac1{d_k^2}\left. (\wh W_k - W_k)^t J W_kW_k^t J (\wh W_k - W_k) \psi_k \psi_k^t\right|\calF_{l} \right] \hspace{20pt} \text{a.s.}\\
	  &= \sum_{l=0}^{n-1} \sum_{k\in\G_l}\frac1{d_k^2} (\wh W_k - W_k)^t J \E\left[\left. W_kW_k^t\right|\calF_l \right] J (\wh W_k - W_k)\psi_k \psi_k^t \hspace{20pt} \text{a.s.}\\
	  &= \sum_{l=0}^{n-1} \sum_{k\in\G_l}\frac1{d_k^2} (\wh W_k - W_k)^t J \begin{pmatrix} P(X_k) & Q(X_k) \\ Q(X_k) & R(X_k) \end{pmatrix} J (\wh W_k - W_k)\psi_k \psi_k^t \hspace{20pt} \text{a.s.}\\
	  &= \sum_{l=0}^{n-1} \sum_{k\in\G_l}\frac1{d_k^2} (\wh W_k - W_k)^t \begin{pmatrix} R(X_k) & Q(X_k) \\ Q(X_k) & P(X_k) \end{pmatrix} (\wh W_k - W_k)\psi_k \psi_k^t \hspace{20pt} \text{a.s.}
\end{align*}
Consequently, Lemma \ref{LFGN} together with \eqref{quadratic1} allows us to say that $\|\langle R\rangle _n\|=\calO(n)$ a.s.~which ensures that $R_n=o(n)$ a.s. Moreover,
\begin{align*}
&\left\|\sum_{k\in\T_{n-1}}\frac1{d_k} \left(\wh V_{2k} - V_{2k} \right)\left(\wh V_{2k+1} - V_{2k+1} \right)\psi_k\right\|\\
&\hspace{120pt} \leq \frac12 \sum_{k\in\T_{n-1}} \frac1{d_k}\left(\left(\wh V_{2k} - V_{2k} \right)^2+\left(\wh V_{2k+1} - V_{2k+1} \right)^2\right)\|\psi_k\|,\\
	&\hspace{120pt}\leq \frac12 \sum_{l=0}^{n-1} \|\wh \theta_l - \theta\|^2 \sum_{k\in\G_l} \frac{\|\Phi_k\|^2\|\psi_k\|}{d_k},\\
	&\hspace{120pt}\leq \frac12 \sum_{l=0}^{n-1} |\G_l| \|\wh \theta_l - \theta\|^2,
\end{align*}
which implies via \eqref{quadratic1} that
$$\left\|\sum_{k\in\T_{n-1}} \left(\wh V_{2k} - V_{2k} \right)\left(\wh V_{2k+1} - V_{2k+1} \right) \psi_k \right\| = \calO(n) \hspace{20pt} \text{a.s.}$$
Therefore, we obtain that $ \|Q_{n-1}(\wh \nu_n - \nu_n)\| = \calO(n)$ a.s.
which leads to \eqref{vitesserho}. Finally, it only remains to prove the a.s.~convergence of $\eta_n$, $\zeta_n$ and $\nu_n$ to $\eta$, $\zeta$ and $\nu$ which will immediately lead to the a.s.~convergence of $\wh \eta_n$, $\wh \zeta_n$ and $\wh \nu_n$ through \eqref{vitesseeta}, \eqref{vitesseetad} and \eqref{vitesserho}, respectively. On the one hand,
\begin{equation}
Q_{n-1} (\eta_n-\eta) = N_n = \sum_{k\in\T_{n-1}} \frac1{d_k} v_{2k} \psi_k \label{diffetan}
\end{equation}
where we recall that $v_{2n} = V_{2n}^2 - \eta^t\psi_n$. It is clear that $(N_n)$ is a square integrable vector martingale with increasing process $\langle N\rangle _n$ given by
\begin{align*}\langle N\rangle_n &= \sum_{l=0}^{n-1} \sum_{k\in\G_l} \frac1{d_k^2} \E[v_{2k}^2|\calF_l] \psi_k \psi_k^t \hspace{20pt} \text{a.s.}\\
	&\leq \gamma \sum_{l=0}^{n-1} \sum_{k\in\G_l} \frac1{d_k} \psi_k \psi_k^t \hspace{20pt} \text{a.s.}
\end{align*}
where $\gamma = \max(\mu_a^4-\sigma_a^4,2\sigma_a^2\sigma_c^2,\mu_c^4-\sigma_c^4)$. Hence,
$$\|\langle N\rangle _n\| \leq \gamma \sum_{l=0}^{n-1} \sum_{k\in\G_l} \frac1{d_k} \|\psi_k\|^2 \leq \gamma |\T_{n-1}| \hspace{20pt} \text{a.s.}$$
which immediately leads to
$\|\langle N\rangle _n\| = \calO(|\T_{n-1}|)$ a.s. Consequently,
$\|N_n\|^2 = \calO(n|\T_{n-1}|)$ a.s.~which leads via \eqref{CVQ} and \eqref{diffetan} to the a.s.~convergence of $\eta_n$ to $\eta$ and to the rate of convergence of Remark \ref{remrate}. The proof of the a.s.~convergence of $\zeta_n$ to $\zeta$ follows exactly the same lines. On the other hand
\begin{equation}
Q_{n-1}(\nu_n-\nu) = H_n = \sum_{k\in\T_{n-1}}\frac1{d_k}w_{2k}\psi_k \label{diffrhon}
\end{equation}
where we recall that $w_{2k} = V_{2k}V_{2k+1} - \E[V_{2k}V_{2k+1} | \calF_n]$. It is obvious to see that $(H_n)$ is a square integrable real martingale with increasing process 
\begin{align*}
\langle H\rangle _n &= \sum_{l=0}^{n-1} \sum_{k\in\G_l} \frac1{d_k^2} \E[w_{2k}^2 |\calF_l] \psi_k \psi_k^t \as\\
	&\leq \alpha \sum_{l=0}^{n-1} \sum_{k\in\G_l} \frac1{d_k}\psi_k\psi_k^t\as
\end{align*}
where $\alpha = \max\(\nu_{ab}^2, \nu_{cd}^2,\(\sigma_a^2+\sigma_c^2\)\(\sigma_b^2+\sigma_d^2\)\)$. This implies that
$$\|\langle H\rangle _n\| \leq \alpha \sum_{l=0}^{n-1} \sum_{k\in\G_l} \frac1{d_k} \|\psi_k\|^2 \leq \alpha |\T_{n-1}|\as$$
which allows us to say that
$$\|H_n\|^2 = \calO(n|\T_{n-1}|) \hspace{20pt} \text{and} \hspace{20pt} \|\wh\nu_n-\nu\|^2 = \calO\(\frac{n}{|\T_{n-1}|}\) \as$$
Finally, we deduce from \eqref{diffrhon} that $\nu_n$ converges a.s.~to $\nu$ and that the rate of convergence of Remark \ref{remrate} is verified, which completes the proof of Theorem \ref{CVpsvar}.


\section{Proof of Theorem \ref{TCL}}\label{demoTCL}


In order to establish the asymptotic normality of our estimators, we will extensively make use of the central limit theorem for triangular arrays of vector martingales given e.g.~by Theorem 2.1.9 of \cite{Duflo}. First of all, instead of using the generation-wise filtration $(\calF_n)$, we will use the sister pair-wise filtration $(\calG_n)$ given by 
$$\calG_n = \sigma(X_1,(X_{2k},X_{2k+1}),1\leq k \leq n ).$$

\noindent {\bf Proof of Theorem \ref{TCL}, first part.} We focus our attention to the proof of the asymptotic normality \eqref{TCLtheta}. Let $M^{(n)} = (M_k^{(n)})$ be the square integrable vector martingale defined as
\begin{equation}
M_k^{(n)} = \frac1{\sqrt{|\T_n|}}\sum_{i=1}^k D_i \label{defnewmart}, \hspace{20pt} \text{where} \hspace{20pt} D_i = \frac1{c_i}\begin{pmatrix}X_iV_{2i}\\V_{2i}\\X_iV_{2i+1}\\V_{2i+1}\end{pmatrix}.
\end{equation}
We clearly have
\begin{equation}
M_{t_n}^{(n)} = \frac1{\sqrt{|\T_n|}}\sum_{i=1}^{t_n} D_i = \frac1{\sqrt{|\T_n|}} M_{n+1}, \label{liennewmart}
\end{equation}
where $t_n=|\T_n|$. Moreover, the increasing process associated to $(M_k^{(n)})$ is given by
\begin{align*}
\langle M^{(n)}\rangle _{k} &= \frac1{|\T_n|} \sum_{i=1}^k \E\left[D_iD_i^t|\calG_{i-1}\right],\\
&= \frac1{|\T_n|} \sum_{i=1}^k \frac1{c_i^2} \begin{pmatrix}
                                P(X_i) & Q(X_i) \\ Q(X_i) & R(X_i) \end{pmatrix} \otimes \begin{pmatrix}X_i^2 & X_i\\X_i & 1 \end{pmatrix} \hspace{20pt} \text{a.s.}
\end{align*}
Consequently, it follows from convergence \eqref{limcrochet} that
$$\lim_{n\to\infty} \langle M^{(n)}\rangle _{t_n} = L \hspace{20pt} \text{a.s.}$$
It is now necessary to verify Lindeberg's condition by use of Lyapunov's condition. Denote
$$\phi_n = \sum_{k=1}^{t_n} \E\left[\left.\|M_k^{(n)} - M_{k-1}^{(n)} \|^4 \right| \calG_{k-1}\right].$$
We obtain from \eqref{defnewmart} that
\begin{align*}
\phi_n &= \frac1{|\T_n|^2} \sum_{k=1}^{t_n} \E\left[\left.\frac{(1+X_k^2)^2}{c_k^4}(V_{2k}^2 + V_{2k+1}^2)^2\right|\calG_{k-1}\right],\\
	&\leq \frac2{|\T_n|^2} \sum_{k=1}^{t_n} \frac1{c_k^2}\left(\E[V_{2k}^4|\calG_{k-1}] + \E[V_{2k+1}^4|\calG_{k-1}]\right).
\end{align*}
In addition, we already saw in Section \ref{demoCVpstheta} that
$$\E[V_{2n}^4|\calG_{n-1}] \leq \mu_{ac}^4 c_n^2, ~~~~~ \E[V_{2n+1}^4|\calG_{n-1}] \leq \mu_{bd}^4 c_n^2 \hspace{20pt} \text{a.s.}$$
where $\mu_{ac}^4 = \max(\mu_a^4,3\sigma_a^2\sigma_c^2,\mu_c^4)$ and $\mu_{bd}^4 = \max(\mu_b^4,3\sigma_b^2\sigma_d^2,\mu_d^4)$. Hence,
$$\phi_n \leq \frac{2(\mu_{ac}^4+\mu_{bd}^4)}{|\T_n|} \hspace{20pt} \text{a.s.}$$
which immediately implies that
$$\lim_{n\to\infty} \phi_n = 0 \hspace{20pt} \text{a.s.}$$
Therefore, Lyapunov's condition is satisfied and Theorem 2.1.9 of \cite{Duflo} allows us to say via \eqref{liennewmart} that
$$\frac1{\sqrt{|\T_{n-1}|}} M_n \liml \calN(0,L).$$
Finally, we infer from \eqref{difftheta} together with \eqref{CVSigman} and Slutsky's lemma that
$$\phantom{\square} \hspace{108pt} \sqrt{|\T_{n-1}|}(\wh\theta_n - \theta) \liml \calN(0,\Lambda^{-1}L\Lambda^{-1}).\hspace{108pt} \square$$

{\bf Proof of Theorem \ref{TCL}, second part.} We shall now establish the asymptotic normality given by \eqref{TCLeta}. Denote by $N^{(n)} = (N_k^{(n)})$ the square integrable vector martingale defined as
\begin{equation*}
N_k^{(n)} = \frac1{\sqrt{|\T_n|}}\sum_{i=1}^k \frac{v_{2i}}{d_i} \psi_i.
\end{equation*}
We immediately see from \eqref{diffetan} that
\begin{equation}N_{t_n}^{(n)} = \frac1{\sqrt{|\T_n|}} Q_n(\eta_{n+1}-\eta) = \frac1{\sqrt{|\T_n|}} N_{n+1}.\label{lienN}\end{equation}
In addition, the increasing process associated to $(N_k^{(n)})$ is given by
\begin{align*}
\langle N^{(n)}\rangle _k &= \frac1{|\T_n|} \sum_{i=1}^k \E\left[\left.\frac{v_{2i}^2}{d_i^2} \psi_i\psi_i^t \right| \calG_{i-1}\right],\\
	&= \frac1{|\T_n|} \sum_{i-1}^k \frac{(\mu_a^4-\sigma_a^4)X_i^4 + 4\sigma_a^2\sigma_c^2X_i^2 + (\mu_c^4-\sigma_c^4)}{d_i^2}\psi_i\psi_i^t \hspace{20pt} \text{a.s.}
\end{align*}
Consequently, we obtain from Lemma \ref{LFGN} that
$$\lim_{n\to\infty} \langle N^{(n)}\rangle _{t_n} = \E\left[\frac{(\mu_a^4-\sigma_a^4)T^4 + 4\sigma_a^2\sigma_c^2T^2 + (\mu_c^4-\sigma_c^4)}{(1+T^2)^4} \begin{pmatrix} T^4 & T^2 \\ T^2 & 1 \end{pmatrix} \right]=M_{ac} \hspace{10pt} \text{a.s.}$$
In order to verify Lyapunov's condition, let $\alpha>4$ be the constant in \H{H5} and let
$$\phi_n = \sum_{k=1}^{t_n} \E\left[\left.\|N_k^{(n)} - N_{k-1}^{(n)}\|^{\alpha/2} \right| \calG_{k-1}\right].$$
We clearly have
$$\|N_k^{(n)} - N_{k-1}^{(n)}\|^2 = \frac1{|\T_n|} \frac{v_{2k}^2}{d_k^2}\|\psi_k\|^2 \leq \frac1{|\T_n|} \frac{v_{2k}^2}{d_k},$$
which implies that
$$\|N_k^{(n)} - N_{k-1}^{(n)}\|^{\alpha/2} \leq \frac1{|\T_n|^{\alpha/4}} \frac{|v_{2k}|^{\alpha/2}}{d_k^{\alpha/4}}.$$
However, it exists a constant $\beta>0$ such that
\begin{equation}
|v_{2k}|^{\alpha/2} = |V_{2k}^2 - \sigma_a^2X_k^2 - \sigma_c^2|^{\alpha/2} \leq \beta(|V_{2k}|^{\alpha} + (\sigma_a^2X_k^2 + \sigma_c^2)^{\alpha/2}). \label{majv2k3}
\end{equation}
Moreover, we also have
$$|V_{2k}|^{\alpha} \leq \beta(|a_k-a|^{\alpha}|X_k|^{\alpha}+|\veps_{2k}-c|^{\alpha}).$$
Let 
$$Y=\max\(\sup_{n\geq0}\sup_{k\in\G_n} \E[|a_k-a|^\alpha|\calF_n],\sup_{n\geq0}\sup_{k\in\G_n} \E[|\veps_{2k}-c|^\alpha|\calF_n]\),$$
then it exists some constant $\gamma>0$  such that
$$\E[|V_{2k}|^{\alpha}|\calG_{k-1}]\leq \beta Y(1+|X_k|^{\alpha})\leq \gamma Y(1+X_k^2)^{\alpha/2}\as$$
This, together with \eqref{majv2k3}, ensures the existence of a constant $\delta>0$ such that
$$\E[|v_{2k}|^{\alpha/2}|\calG_{k-1}]\leq\delta Y(1+X_k^2)^{\alpha/2}\as$$
implying that
$$\E\left[\left.\|N_k^{(n)} - N_{k-1}^{(n)}\|^{\alpha/2} \right| \calG_{k-1}\right]\leq\frac{\delta Y}{|\T_n|^{\alpha/4}}\as$$
Then we can conclude that
$$\phi_n\leq \frac{\delta Y}{|\T_n|^{\alpha/4-1}}\as$$
which immediately leads, since $Y<\infty$ a.s., to
$$\lim_{n\to\infty} \phi_n = 0 \as$$
\noindent Therefore, Lyapunov's condition is satisfied and we find from Theorem 2.1.9 of \cite{Duflo} and \eqref{lienN} that
\begin{equation}\label{limN}
\frac1{\sqrt{|\T_{n-1}|}}N_n \liml \calN(0,M_{ac}).
\end{equation}
Hence, we obtain from \eqref{CVQ}, \eqref{limN} and Slutsky's lemma that
$$\sqrt{|\T_{n-1}|}(\eta_n-\eta) \liml \calN(0,D^{-1}M_{ac} D^{-1}).$$
Finally, \eqref{vitesseeta} ensures that
$$\sqrt{|\T_{n-1}|}(\wh\eta_n-\eta) \liml \calN(0,D^{-1}M_{ac} D^{-1}).$$
The proof of \eqref{TCLetad} follows exactly the same lines. \hfill $\square$

{\bf Proof of Theorem \ref{TCL}, third part.} It remains to establish the asymptotic normality given by \eqref{TCLrho}. Denote by $H^{(n)} = (H_k^{(n)})$ the square integrable martingale defined as
\begin{equation*} \label{lienH}
H_k^{(n)} = \frac1{\sqrt{|\T_n|}} \sum_{i=1}^k \frac{w_{2i}}{d_i} \psi_i.
\end{equation*}
We clearly have 
$$H_{t_n}^{(n)} = \frac1{\sqrt{|\T_n|}} \sum_{i=1}^{t_n} \frac{w_{2i}}{d_i} \psi_i = \frac1{\sqrt{|\T_n|}}H_{n+1}.$$
Moreover, the increasing process of $(H_k^{(n)})$ is given by
$$\langle H^{(n)}\rangle _k = \frac1{|\T_n|} \sum_{i=1}^k\frac{\E[w_{2i}^2|\calG_{i-1}] \psi_i \psi_i^t}{d_i^2}.$$
In addition, we already saw in Section 3 that
\begin{equation*}
\E[w_{2k}^2|\calF_n] = (\nu_{ab}^2-\rho_{ab}^2) X_k^4 + (\sigma_a^2\sigma_d^2 + \sigma_b^2\sigma_c^2+2\rho_{ab}\rho_{cd})X_k^2 + (\nu_{cd}^2-\rho_{cd}^2) \as
\end{equation*}
Then, we deduce once again from Lemma \ref{LFGN} that
\begin{align*}
&\lim_{n\to\infty} \langle H^{(n)}\rangle _{t_n} \\
&\hspace{30pt}= \E\left[\frac{(\nu_{ab}^2-\rho_{ab}^2) T^4 + (\sigma_a^2\sigma_d^2 + \sigma_b^2\sigma_c^2+2\rho_{ab}\rho_{cd})T^2 + (\nu_{cd}^2-\rho_{cd}^2)}{(1+T^2)^4}\begin{pmatrix} T^4 & T^2 \\ T^2 & 1 \end{pmatrix} \right]\\
&\hspace{30pt}= H\as
\end{align*}
In order to verify Lyapunov's condition, denote, with $\alpha>4$ the constant in \H{H5},
$$\phi_n = \sum_{k=1}^{t_n} \E\left[\left.\|H_k^{(n)} - H_{k-1}^{(n)}\|^{\alpha/2}\right|\calG_{k-1}\right].$$
As in the previous proof, we clearly have that
$$\|H_k^{(n)} - H_{k-1}^{(n)}\|^{\alpha/2} \leq \frac1{|\T_n|^{\alpha/4}} \frac{|w_{2k}|^{\alpha/2}}{d_k^{\alpha/4}}.$$
We can observe that it exists some constants $\beta>0$ and $\gamma>0$ such that
\begin{align*}
|w_{2k}|^{\alpha/2} &= |V_{2k}V_{2k+1} - \rho_{ab} X_k^2 -\rho_{cd}|^{\alpha/2} \leq \(|V_{2k}V_{2k+1}| + |\rho_{ab}| X_k^2 + |\rho_{cd}|\)^{\alpha/2},\\
	&\leq \beta(|V_{2k}V_{2k+1}|^{\alpha/2}+(|\rho_{ab}|X_k^2+|\rho_{cd}|)^{\alpha/2}), \\
	&\leq\gamma(|V_{2k}|^{\alpha} + |V_{2k+1}|^{\alpha}+(|\rho_{ab}|X_k^2+|\rho_{cd}|)^{\alpha/2}) .
\end{align*}
Hence, in the same way as in the proof of the second part, we can prove that it exists a constant $\delta>0$ and a random variable $Y$ such that $Y<\infty$ a.s.~verifying
$$\E[|w_k|^{\alpha/2}|\calG_{k-1}] \leq \delta Y(1+X_k^2)^{\alpha/2} \as$$
which immediately leads to
$$\E[\|H_k^{(n)} - H_{k-1}^{(n)}\|^{\alpha/2} |\calG_{k-1}] \leq \frac{\delta Y}{|\T_n|^{\alpha/4}} \as$$
which ensures that
$$\phi_n \leq \frac{\delta Y}{|\T_n|^{\alpha/4-1}} \as$$
Then, we obviously have that
$$\lim_{n\to\infty} \phi_n =0 \hspace{20pt} \text{a.s.}$$
and we can conclude that
$$\frac1{\sqrt{|\T_{n-1}|}}H_{n} \liml \calN(0,H).$$
In other words
$$\sqrt{|\T_{n-1}|} (\nu_{n} -\nu) \liml \calN(0,D^{-1}HD^{-1}).$$
Finally, we find via \eqref{vitesserho} that
$$\sqrt{|\T_{n-1}|} (\wh\nu_{n} -\nu) \liml \calN(0,D^{-1}HD^{-1})$$
which achieves the proof of Theorem \ref{TCL}. \hfill $\square$\\

\section{Numerical simulations}

The goal of this section is to illustrate by simulations the main results of this paper. In order to keep this section brief, we shall only focus our attention on the asymptotic normality of the WLS estimator of the unknown parameter $\theta$. On the one hand the random coefficient sequence $(a_n,b_n)$ is chosen to be i.i.d~sharing the same distribution as $(X+Y,X+Z)$ where $X\sim \calN(0.5,0.4)$, $Y\sim\calN(0,0.3)$ and $Z\sim\calN(-0.2,0.4)$. Those parameters have been chosen in order to satisfy \H{H1}. On the other hand, the driven noise sequence $(\veps_{2n},\veps_{2n+1})$ is chosen to be i.i.d.~sharing the same distribution as $(U+V,U+W)$ where $U\sim \calE(1)$, $V\sim\calE(2)$ and $W\sim\calE(3)$ and $\calE(\lambda)$ stands for the exponential distribution with parameter $\lambda>0$. The histograms are made by computing 4000 times $\wh\theta_n$ with $n=13$, and the variances of the theoretical normal distributions, which are plotted with the red curve, have been estimated by a Monte-Carlo procedure. One can observe in Figure 2 that the WLS estimator $\wh\theta_n$ performs very well in the estimation of $\theta$.

\begin{figure}[h!]
\includegraphics[scale=0.65]{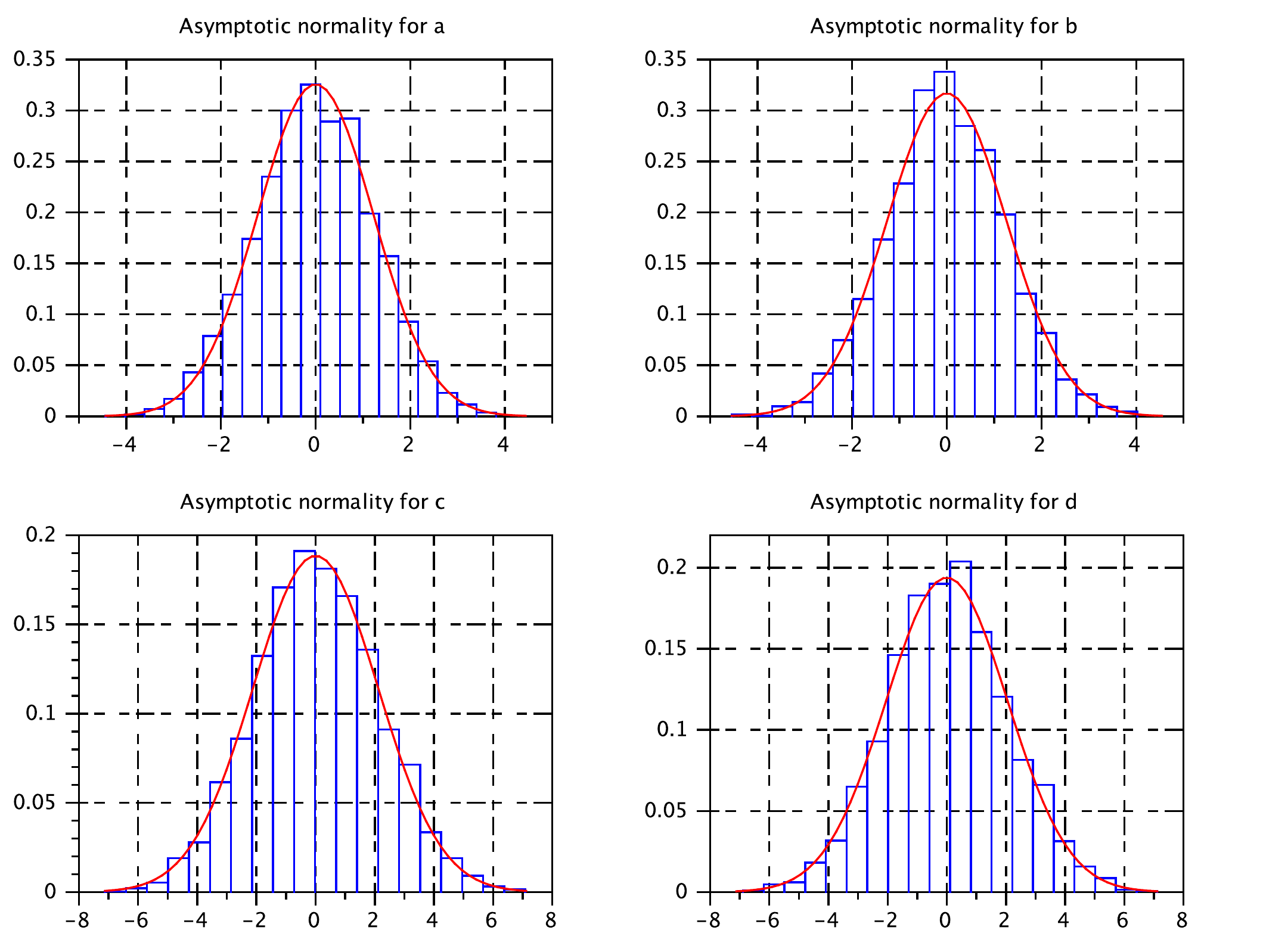}
\caption{Illustration of the asymptotic normalities of $a$, $b$, $c$ and $d$.}
\end{figure}
\hspace{0pt}\\
{\bf Acknowledgement.} I would like to thank Bernard Bercu for his helpful suggestions and for thorough readings of the paper.
\nocite{*}
\bibliographystyle{acm}
\bibliography{RCBARv1}

\end{document}